\documentclass{siamart220329}

\usepackage{bm}
\usepackage{tikz-cd}
\usepackage{amsfonts}
\usepackage{graphicx}
\usepackage{algorithmic}
\usepackage{amsmath,amssymb}
\usepackage{mathrsfs}
\usepackage{stmaryrd}
\usepackage{booktabs}
\usepackage{multirow}
\usepackage{placeins}

\newsiamremark{remark}{Remark}
\newtheorem{assumption}{Assumption}

\headers{Superconvergence in FEM by Smoothing}{Y. Li, H. Shui, and L. Zikatanov}

\title{Superconvergence in Finite Element Method by Smoothing\thanks{This work was supported by the National Key R\&D Program of China under grant 2024YFA1012600 and the National Natural Science Foundation of China under grant 12471346.}}

\author{
Yuwen Li\thanks{School of Mathematical Sciences, Zhejiang University, 866 Yuhangtang Road, Hangzhou, Zhejiang 310058, People's Republic of China 
(\email{liyuwen@zju.edu.cn}, \email{shuihan@zju.edu.cn}).}\and Han Shui\footnotemark[2]
\and
  Ludmil Zikatanov\thanks{Center for Computational Mathematics and its Applications, Department of Mathematics, The Pennsylvia State University, University Park, PA 16802, USA (\email{ludmil@psu.edu}).}
}

\begin{document}
\maketitle

\begin{abstract}
This paper develops a smoothing-based postprocessing method for superconvergence in finite element methods. The method applies a few smoothing iterations, such as damped Jacobi, Gauss--Seidel, or conjugate gradient, with initial guess being the current finite element solution embedded in an enriched finite element space. The resulting procedure is algebraic, easy to implement, and applicable to high-order and three-dimensional discretizations. For symmetric and positive-definite problems, we prove superconvergence of the smoothed solutions under additive and multiplicative smoothers. Effectiveness of the proposed method is demonstrated by numerical experiments for the Poisson, Maxwell, biharmonic and Helmholtz equations.
\end{abstract}

\begin{keywords}
superconvergence, postprocessing, smoothing, preconditioner, a posteriori error estimate, adaptive finite element method 
\end{keywords}


\section{Introduction}
Superconvergence in finite element (FE) methods has been under extensive investigation since the 1970s. Research results in this field can be divided into postprocessing-/recovery-type superconvergence and natural superconvergence. The former ones use an operator $R$ to improve accuracy of the FE solution $u_h$ approximating the exact solution $u$. A typical example is the gradient recovery superconvergence  $\|\nabla u-Ru_h\|\ll\|\nabla u-\nabla u_h\|$, see, e.g.,~\cite{ZienkiewiczZhu1992a,BankXu2003a,ZhangNaga2005}. The computational cost of $R$ is proportional to the number of degrees of freedom (dofs), e.g., $Ru_h$ is a local average or least-squares fitting of functional/derivative values  of $u_h$. On the other hand, natural superconvergence of $u-u_h$ happens at special points without employing any postprocessing in FE solutions. In either case, superconvergence analysis is highly sensitive to mesh structures, space dimensions and types of FEs. Most postprocessing-type superconvergence results were proved on highly structured 2D grids, see, e.g., \cite{ZienkiewiczZhu1992a,BankXu2003a,ZhangNaga2005,Li2018SINUM,HuangLiWuYang2015,BankLi2019,DuWuZhang2020,Li2021JSC,Li2021JSCb,Kim2021,HuMaMa2021,ZhangChenHuangYi2021,ChenLi2022,HeChenJiWang2024}, while natural superconvergence is limited to second-order elliptic PDEs and Lagrange FEs on structured meshes (cf.~\cite{Wahlbin1995,LinZhang2008,HeLinZhang2016}). An exception is the element-wise superconvergent postprocessing in mixed and hybridized FEs on general unstructured meshes (cf.~\cite{ArnoldBrezzi1985,CockburnFuSayas2017,CockburnGuzmanWang2009,YeZhang2023}). 

In this work, we present a new postprocessing procedure based on smoothing passes in a higher order FE space. The key module is a smoother $S$, which corresponds  to one step of Jacobi or Gauss-Seidel (GS) iteration for a higher order FE stiffness matrix. Our postprocessed FE solution $R_mu_h$ is the output of $m$-step ($m\leq4$) fixed-point or preconditioned conjugate gradient (PCG) iterations for the higher order FE linear system with $u_h$ as the initial guess. We  prove superconvergence error estimates for positive-definite problems such as the Poisson, Maxwell and biharmonic equations on arbitrary quasi-uniform grids. The implementation of the smoothing procedure is a two-grid method, which prolongates the current FE solution $u_h$ to an auxiliary higher order FE space and then performs simple smoothing passes. We remark that the cost of this process is linear in the number of dofs.

Superconvergence in FE methods by smoothing was initiated in the seminal work \cite{BankXu2003b} and generalized to high-order and $h$-$p$ FEs in \cite{BankXuZheng2007,BankNguyen2011}. The smoothing approach developed in \cite{BankXu2003b,BankXuZheng2007,BankNguyen2011} is dependent on hierarchical decomposition of FE spaces and is devoted to recovery of derivatives of Lagrange-type FE solutions in 2D, e.g., gradient and Hessian recovery. It is not clear how to extend the theoretical results or even the numerical algorithms in \cite{BankXu2003b,BankXuZheng2007,BankNguyen2011} to higher space dimensions and other popular FEs. In comparison, our smoothing-type superconvergence is directly applicable to arbitrary high-order FEs in 3D and covers many important examples such as Nédélec's edge FEs.

Postprocessing-type superconvergence is desirable in adaptive FE methods since the quantity $\|u_h-Ru_h\|_a$ (under an energy norm $\|\bullet\|_a$) serves as an asymptotically exact a posteriori error estimate for guiding local mesh refinement. In practice, postprocessing combined with adaptive feedback yields obvious numerical superconvergence even for PDEs with singularity, although theoretical superconvergence analysis is often proved under regularity assumptions on exact solutions as well as domains. An empirical consequence is the asymptotic exactness $\lim\|u_h-Ru_h\|_a/\|u-u_h\|_a=1$ as $h$ tends to zero. Our a posteriori error estimate based on smoothing $R_m$ is a $p$-variant of the $h$-$h/2$ smoother-type error estimator in \cite{LiShui2026} built upon an auxiliary finer mesh, while the ones in \cite{LiShui2026} are not asymptotically exact. Readers are referred to \cite{BankSmith1993,Bank1996,MulitaGianiHeltai2021,LiZikatanov2021CAMWA,LiZikatanov2025mcom} for other interesting a posteriori error estimates and adaptive algorithms motivated by linear iterative solvers.

The rest of the paper is organized as follows. In
Section~\ref{sect:framework}, we set up the abstract framework and derive the main theoretical results. In Section~\ref{sect:examples}, we present applications of smoothing superconvergence for continuous and discontinuous FE methods for the Poisson, Maxwell, and biharmonic equations. Section~\ref{sect:numerical} illustrates the numerical effectiveness of smoothing superconvergence and applications to adaptive FEs. Concluding remarks are given in Section~\ref{sect:conclusion}.

\section{Framework of Smoothing Superconvergence}\label{sect:framework}
In this section, we present a framework for superconvergence in FE methods by smoothing passes. 

\subsection{Abstract Setting}
Let $\mathcal{T}_h$ be a quasi-uniform triangulation of a domain $\Omega\subset\mathbb{R}^d$ with mesh-size $h$ and $V$ be an FE space based on $\mathcal{T}_h$. Let $a: V\times V\rightarrow \mathbb{R}$ be a symmetric and coercive bilinear form. Let $\|v\|_a=\sqrt{a(v,v)}$ denote the energy norm of $v\in V$. Let $V^\prime$ denote the dual space consisting of continuous linear functionals on $V$. By $T^\prime: W^\prime\rightarrow V^\prime$ we mean the Banach adjoint of a continuous linear operator $T: V\rightarrow W$ between Banach spaces.

Given $f\in V^\prime$, an FE discretization seeks $u_h\in V$ such that
\begin{equation}\label{eq:FEM}
    a(u_h,v_h)=f(v_h),\quad \forall v_h\in V.
\end{equation}
Here $u_h$ approximates the true solution $u$ of some PDE model. For obtaining superconvergence, we shall make use of an enriched FE space $\widetilde{V}\supset V$ with $\dim\widetilde{V}=\widetilde{N}$. For theoretical analysis, we consider the enriched FE solution $\tilde{u}_h\in \widetilde{V}$ satisfying
\begin{equation}\label{eq:FEM_tilde}
a(\tilde{u}_h,v_h)=\tilde{f}(v_h),\quad \forall v_h\in\widetilde{V},  
\end{equation}
where $\tilde{f}\in\widetilde{V}^\prime$ with $\tilde{f}|_{V}=f$ and $a$ is extended to $\widetilde{V}$.

Let $\langle\bullet,\bullet\rangle$ denote the duality pairing between $\widetilde{V}^\prime$ and $\widetilde{V}$. We say $B: \widetilde{V}\rightarrow \widetilde{V}^\prime$ is symmetric and positive-definite (SPD) provided $\langle Bv_1,v_2\rangle=\langle Bv_2,v_1\rangle$ and $\langle Bv_3,v_3\rangle>0$ for any $v_1, v_2\in\widetilde{V}$, $0\neq v_3\in\widetilde{V}$. The $B$-inner product of $\widetilde{V}$ is $(\bullet,\bullet)_B=\langle B\bullet,\bullet\rangle$. The symmetry and positive-definiteness of 
$B: \widetilde{V}^\prime\rightarrow \widetilde{V}$ are defined in a similar way.

We define the operator $A: \widetilde{V}\rightarrow \widetilde{V}^\prime$ in a standard fashion using the symmetric and positive definite bilinear form: $a: \widetilde{V}\times \widetilde{V}\rightarrow\mathbb{R}$, namely, 
\begin{equation*}   \langle Av_h,w_h\rangle=a(v_h,w_h),\quad v_h, w_h\in \widetilde{V}.
\end{equation*}
We introduce a smoother $S: \widetilde{V}^\prime\rightarrow \widetilde{V}$, which, in the simplest case, corresponds to the Jacobi or GS iteration for the enriched problem \eqref{eq:FEM_tilde}. Our goal is to postprocess $u_h$ using several steps of smoothing as outlined in Algorithm \ref{alg:smoothing}.

\begin{algorithm}[thbp]
\caption{Postprocessing $R_m$ by simple smoothing}\label{alg:smoothing}
\begin{algorithmic}
\STATE \textbf{Input}: $u_h\in V$, a smoother $S: \widetilde{V}^\prime\rightarrow\widetilde{V}$;
\STATE set $u_0=u_h$;
\FOR{$k=1:m$}
    \STATE $u_k=u_{k-1}+S(\tilde{f}-Au_{k-1})$;
\ENDFOR

\STATE \textbf{Output}: $R_mu_h=u_m$.

\end{algorithmic}
\end{algorithm}

The cost of one action of $S$ is $\mathcal{O}(N)$. The total cost of $R_mu_h$ is $\mathcal{O}(mN)$, which is a legitimate postprocessing procedure for superconvergence. In the matrix-vector notation, Algorithm \ref{alg:smoothing} translates into 
\begin{equation*}
  \mathbf{u}_k=\mathbf{u}_{k-1}+\mathbf{S}(\tilde{\mathbf{f}}-\mathbf{A}\mathbf{u}_{k-1}),  
\end{equation*}
where $\mathbf{u}_k$ is the coordinate vector of $u_k$, and $\mathbf{A}$ denotes the stiffness matrix for $A$. Let $\mathbf{A}=\mathbf{D}-\mathbf{L}-\mathbf{L}^\top$ be the usual diagonal-triangular splitting of $\mathbf{A}$. The Jacobi or GS smoother $S$ is represented by the matrix $\mathbf{S}=\mathbf{D}^{-1}$ or $\mathbf{S}=(\mathbf{D}-\mathbf{L})^{-1}$, respectively, see Section~\ref{sect:examples} for details.

An alternative strategy is applying $m$ steps of preconditioned conjugate gradient iterations with initial guess $u_0=u_h$ and a preconditioner $S: \widetilde{V}^\prime\rightarrow\widetilde{V}$ to construct $R_mu_h$, see Algorithm~\ref{alg:PCG}. On the matrix level, the coordinate vector of $R_mu_h$ is the output of $m$-step PCG iterations for $\mathbf{A}\tilde{\mathbf{u}}=\tilde{\mathbf{f}}$ with preconditioner $\mathbf{S}$ and initial guess being the prolongated coordinate vector of $u_h$.

\begin{algorithm}[thbp]
\caption{Postprocessing $R_m$ by PCG smoothing}\label{alg:PCG}
\begin{algorithmic}
\STATE \textbf{Input}: $u_h\in V$, an SPD operator $S: \widetilde{V}^\prime\rightarrow\widetilde{V}$;
\STATE set $u_0=u_h$, $r_0=\tilde{f}-Au_h$, $p_0=z_0=Sr_0$;
\FOR{$k=1:m$}
 \STATE $\alpha_k=\langle r_{k-1},z_{k-1}\rangle/\langle Ap_{k-1},p_{k-1}\rangle$;
     \STATE $u_k=u_{k-1}+\alpha_kp_{k-1}$;
     \STATE $r_k=r_{k-1}-\alpha_kAp_{k-1}$;
     \STATE $z_k=Sr_k$;
     \STATE $\beta_k=\langle r_k,z_k\rangle/\langle r_{k-1},z_{k-1}\rangle$;
     \STATE $p_k=z_k+\beta_kp_{k-1}$;
 \ENDFOR

\STATE \textbf{Output}: $R_mu_h=u_m$.

\end{algorithmic}
\end{algorithm}

\subsection{Main Theoretical Results}
Note that $SA: \widetilde{V}\rightarrow\widetilde{V}$ is SPD with respect to the inner product $(\bullet,\bullet)_{S^{-1}}$. Therefore, $SA$ admits $\widetilde{N}=\dim\widetilde{V}$ positive eigenvalues $0<\lambda_1\leq\lambda_2\leq\cdots\leq\lambda_{\widetilde{N}}$. Let $\psi_1, \psi_2, \ldots, \psi_{\widetilde{N}}\in\widetilde{V}$ be the corresponding orthonormal eigenfunctions of $SA$, i.e., $A\psi_i=\lambda_iS^{-1}\psi_i$ and $(\psi_i,\psi_j)_{S^{-1}}=\delta_{ij}$. For any $\alpha\in\mathbb{R}$, we define the fractional power of $SA$ by diagonalization
\begin{equation*}
    (SA)^\alpha v:=\sum_{i=1}^{\widetilde{N}}\lambda_i^\alpha v_i\psi_i,\quad \forall v=\sum_{i=1}^{\widetilde{N}}v_i\psi_i\in \widetilde{V}.
\end{equation*}
Then we define the fractional-order norm as 
\[
\!|\!|\!|v\,\!|\!|\!|_\alpha:=\|(SA)^{\frac{\alpha}{2}}v\|_{S^{-1}},\qquad v\in\widetilde{V}.
\]
We summarize basic properties in the next lemma.
\begin{lemma}
Let $p$ be a product of polynomials and power functions, and $\sigma(SA)$ be the spectrum of $SA$. Then for $\alpha\in[0,1]$ and  $v\in\widetilde{V}$,  it holds that
\begin{subequations}
\begin{align}
\!|\!|\!|v\,\!|\!|\!|_\alpha&\leq\!|\!|\!|v\,\!|\!|\!|^{\alpha}_1\,\!|\!|\!|v\,\!|\!|\!|_0^{1-\alpha},\label{eq:Holder}\\
\!|\!|\!|v\,\!|\!|\!|_1&=\|v\|_a,\\
\!|\!|\!|v\,\!|\!|\!|_0&=\|v\|_{S^{-1}},\\
\|p(SA)v\|_{S^{-1}}&\leq \Big(\max_{t\in\sigma(SA)}|p(t)|\Big)\|v\|_{S^{-1}}. \label{eq:pAnorm}
\end{align}  
\end{subequations}
\end{lemma}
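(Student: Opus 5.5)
The plan is to work entirely in the $S^{-1}$-orthonormal eigenbasis $\{\psi_i\}$ of $SA$, where every statement reduces to an inequality for weighted sums of the coefficients. Fix $v=\sum_i v_i\psi_i$. By definition $(SA)^{\alpha/2}v=\sum_i\lambda_i^{\alpha/2}v_i\psi_i$. Orthonormality $(\psi_i,\psi_j)_{S^{-1}}=\delta_{ij}$ then gives
\[
\!|\!|\!|v\,\!|\!|\!|_\alpha^2=\sum_{i=1}^{\widetilde N}\lambda_i^{\alpha}v_i^2 .
\]
This single formula is the engine for all four claims.

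\emph{Norm identities.} Setting $\alpha=0$ gives $\sum_i v_i^2=\|v\|_{S^{-1}}^2$, which is the third identity. For $\alpha=1$ I use $A\psi_i=\lambda_iS^{-1}\psi_i$ to compute
\[
\|v\|_a^2=\langle Av,v\rangle=\sum_{i,j}v_iv_j\langle A\psi_i,\psi_j\rangle=\sum_{i,j}v_iv_j\lambda_i(\psi_i,\psi_j)_{S^{-1}}=\sum_i\lambda_iv_i^2 .
\]
This matches $\!|\!|\!|v\,\!|\!|\!|_1^2$, giving the second identity.

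\emph{Interpolation inequality \eqref{eq:Holder}.} For $\alpha\in(0,1)$ I apply Hölder's inequality with exponents $1/\alpha$ and $1/(1-\alpha)$ to the splitting $\lambda_i^\alpha v_i^2=(\lambda_iv_i^2)^{\alpha}(v_i^2)^{1-\alpha}$. This yields
\[
\sum_i\lambda_i^\alpha v_i^2\le\Big(\sum_i\lambda_iv_i^2\Big)^{\alpha}\Big(\sum_iv_i^2\Big)^{1-\alpha},
\]
and taking square roots gives the claim. The endpoints $\alpha=0,1$ hold trivially with equality.

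\emph{Spectral bound \eqref{eq:pAnorm}.} First I need $p(SA)$ to act diagonally, i.e.\ $p(SA)\psi_i=p(\lambda_i)\psi_i$. For polynomials this follows from $(SA)^k\psi_i=\lambda_i^k\psi_i$. For power functions $t^\beta$ it is the definition of $(SA)^\beta$, and products of such factors act by multiplying the eigenvalues. Then
\[
\|p(SA)v\|_{S^{-1}}^2=\sum_ip(\lambda_i)^2v_i^2\le\max_{t\in\sigma(SA)}|p(t)|^2\,\|v\|_{S^{-1}}^2 .
\]

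\emph{Main obstacle.} There is no real analytic difficulty. The one point needing care is the consistency of the functional calculus: a product of a polynomial and fractional powers of $SA$ must be the same diagonal operator as the one obtained by evaluating $p$ at the eigenvalues. Since each factor is diagonal in the same basis $\{\psi_i\}$, the factors commute and compose by multiplying their eigenvalues, which settles this.
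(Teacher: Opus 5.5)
Your proposal is correct and matches the paper's proof: expanding $v$ in the $S^{-1}$-orthonormal eigenbasis of $SA$ and applying H\"older's inequality with exponents $1/\alpha$ and $1/(1-\alpha)$ to $\lambda_i^\alpha v_i^2=(\lambda_i v_i^2)^\alpha(v_i^2)^{1-\alpha}$ is exactly how the paper proves the interpolation inequality. The paper calls the other three properties obvious and does not prove them; your eigenbasis verifications of those are correct.
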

\begin{proof}
We only prove the property~\eqref{eq:Holder} because the others are obvious. Let $v=\sum_iv_i\psi_i\in \widetilde{V}$.
It follows from H\"older inequality that
\begin{align*}
\!|\!|\!|v\,\!|\!|\!|_\alpha^2&=\sum_{i=1}^{\widetilde{N}}\lambda_i^{\alpha} v_i^2=\sum_{i=1}^{\widetilde{N}}(\lambda_i^{\alpha} v_i^{2\alpha}) v_i^{2(1-\alpha)}\\
&\leq\Big(\sum_{i=1}^{\widetilde{N}}\lambda_i v_i^2\Big)^\alpha\Big(\sum_{i=1}^{\widetilde{N}}v_i^{2}\Big)^{1-\alpha}=\!|\!|\!|v\,\!|\!|\!|_1^{2\alpha}\,\!|\!|\!|v\,\!|\!|\!|_0^{2(1-\alpha)}.
\end{align*}
The proof is complete. 
\end{proof}

Consider the following factors used in  multigrid analysis
\begin{equation*}
f(\alpha,\beta)=\frac{\alpha^\alpha\beta^\beta}{(\alpha+\beta)^{\alpha+\beta}}=\max_{t\in[0,1]}t^\alpha(1-t)^\beta.
\end{equation*}
Now we are in a position to present our main theoretical result. 
\begin{theorem}\label{thm:main}
Let $S: \widetilde{V}^\prime\rightarrow\widetilde{V}$ be SPD and the following assumptions be true: 
\begin{subequations}
\begin{align}
\lambda_{\max}(SA)&\leq 1,\label{eq:SA_spectrum}\\
 \|\tilde{u}_h-u_h\|_{S^{-1}}&\leq \sqrt{\delta}\|\tilde{u}_h-u_h\|_a, \label{eq:high_frequency}
 \end{align}
\end{subequations}
where $\delta>1$ is a constant.
Then for Algorithm \ref{alg:smoothing}, it holds that 
\begin{equation*}
\|u-R_mu_h\|_a\leq2\|u-\tilde{u}_h\|_a+\varepsilon_m(\delta)\|u-u_h\|_a,
\end{equation*}  
where $\varepsilon_m(\delta)$ is defined as 
\begin{equation*}
\varepsilon_m(\delta)=\left\{\begin{aligned}
        \Big(\frac{\delta-1}{\delta}\Big)^m,\quad m\leq(\delta-1)/2,\\
        \delta^{\frac{1}{2}}f\big(m,1/2\big),\quad m>(\delta-1)/2.
    \end{aligned}\right.
\end{equation*}
\end{theorem}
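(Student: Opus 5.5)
The plan is to reduce the estimate to a bound on the error propagation of the smoother applied to the initial error $e_0=\tilde u_h-u_h$. Since $\tilde f=A\tilde u_h$ by \eqref{eq:FEM_tilde}, the errors $e_k=\tilde u_h-u_k$ satisfy $e_k=(I-SA)e_{k-1}$. Hence
\[
\tilde u_h-R_mu_h=(I-SA)^m(\tilde u_h-u_h).
\]
The triangle inequality then gives
\[
\|u-R_mu_h\|_a\le\|u-\tilde u_h\|_a+\|(I-SA)^me_0\|_a .
\]
It therefore suffices to show
\[
\|(I-SA)^me_0\|_a\le\varepsilon_m(\delta)\|e_0\|_a .
\]
Indeed, combined with $\|e_0\|_a\le\|u-\tilde u_h\|_a+\|u-u_h\|_a$ and $\varepsilon_m(\delta)\le1$, this yields the stated estimate with constant $2$ in front of $\|u-\tilde u_h\|_a$. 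The bound $\varepsilon_m\le 1$ holds in the first case trivially. In the second case it holds since $f(m,1/2)\le (2m+1)^{-1/2}$ roughly and $2m+1>\delta$; I will verify this inequality by the elementary estimate $(m/(m+1/2))^m\le 1$, which gives $f(m,1/2)\le (2m+1)^{-1/2}$.

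To bound $\|(I-SA)^me_0\|_a$, I expand $e_0=\sum_i v_i\psi_i$ in the $S^{-1}$-orthonormal eigenbasis of $SA$. Then
\[
\|(I-SA)^me_0\|_a^2=\sum_i\lambda_i(1-\lambda_i)^{2m}v_i^2,\qquad \|e_0\|_a^2=\sum_i\lambda_iv_i^2,\qquad \|e_0\|_{S^{-1}}^2=\sum_i v_i^2 .
\]
By \eqref{eq:SA_spectrum}, every $\lambda_i\in(0,1]$. Assumption \eqref{eq:high_frequency} says $\sum_i v_i^2\le\delta\sum_i\lambda_i v_i^2$. This is the key constraint: the error is concentrated on high frequencies in an averaged sense. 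The problem becomes maximizing
\[
\sum_i\lambda_i(1-\lambda_i)^{2m}v_i^2
\]
subject to $\sum_i\lambda_i v_i^2=1$ and $\sum_i v_i^2\le\delta$. Writing $w_i=\lambda_iv_i^2$, this is maximizing $\sum_i w_i g(\lambda_i)$ with $g(t)=(1-t)^{2m}$, under $\sum_i w_i=1$ and $\sum_i w_i/\lambda_i\le\delta$. Equivalently, I maximize $\mathbb{E}[g(\lambda)]$ over probability measures $\mu$ on $(0,1]$ with $\mathbb{E}[1/\lambda]\le\delta$. For a clean argument I use a linear (Lagrangian) bound: find $c\ge0$ with
\[
g(t)\le \varepsilon^2+c\,(1/t-\delta)\qquad\text{for } t\in(0,1].
\]
Averaging against $\mu$ then gives $\mathbb{E}[g]\le\varepsilon^2$.

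The two regimes come from the two choices of the supporting constraint. In the second regime I take $\varepsilon^2=\delta f(m,1/2)^2$ with $c=\varepsilon^2/\delta$ and the $-\delta$ term dropped. The requirement becomes $t(1-t)^{2m}\le f(m,1/2)^2\cdot$const, i.e.
\[
g(t)\le \delta f(m,1/2)^2/(\delta t)=f(m,1/2)^2/t,
\]
which is exactly $t(1-t)^{2m}\le\max_{s\in[0,1]}s(1-s)^{2m}=f(1/2,m)^2\cdot$(rescaling). Equivalently, I use H\"older-type splitting
\[
\sum\lambda_i(1-\lambda_i)^{2m}v_i^2\le\max_t t(1-t)^{2m}\sum v_i^2\le f(1/2,m)^2\delta\|e_0\|_a^2,
\]
noting $\max_t t(1-t)^{2m}=f(1,2m)=f(1/2,m)^2$. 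In the first regime, $m\le(\delta-1)/2$, I use that the extremal measure concentrates at $t=1/\delta$, giving $(1-1/\delta)^{2m}$. To prove it, I show $g$ lies below the tangent-type line $\varepsilon^2+c(1/t-\delta)$ touching at $t_0=1/\delta$, with $c=-g'(t_0)t_0^2$. This requires the function $s\mapsto g(1/s)=(1-1/s)^{2m}$ on $s\in[1,\infty)$ to be concave for $s\ge\delta$ and to lie below its tangent at $s=\delta$. The second derivative changes sign at $s=(2m+1)/2$, so concavity on $[\delta,\infty)$ holds iff $\delta\ge m+1/2$, which is exactly the case condition. On $[1,\delta]$ the tangent at $\delta$ still dominates because the function is convex-then-concave, starts at $0$ at $s=1$, and the tangent value at $s=1$ is nonnegative; this also needs checking.

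The main obstacle is this first-regime verification: confirming that the tangent at $s=\delta$ dominates $(1-1/s)^{2m}$ on all of $[1,\infty)$, including the convex part near $s=1$, using only $2m+1\le\delta$. I would try first to check that the tangent line's value at $s=1$ is $\ge0$ and that there are no further crossings by the convex–concave shape. If that fails, I would use a Jensen argument on the concave part plus a separate crude bound for eigenvalues with $1/\lambda_i<m+1/2$.
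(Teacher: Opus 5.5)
Your proposal is correct. The reduction to $\|(I-SA)^m e_0\|_a\le\varepsilon_m(\delta)\|e_0\|_a$ and the check $\varepsilon_m\le 1$ match the paper. Your second regime is exactly the paper's bound at the endpoint $\beta=0$.

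The step you flag as the main obstacle closes without any fallback. Take $h(s)=(1-1/s)^{2m}$ and let $\ell$ be its tangent at $s=\delta$. Then
\[
\ell(1)=h(\delta)-h'(\delta)(\delta-1)=\Big(1-\frac{1}{\delta}\Big)^{2m}\Big(1-\frac{2m}{\delta}\Big)\ge 0,
\]
because $\delta\ge 2m+1$. Since $h''$ has the sign of $2m+1-2s$, you get $h\le\ell$ on $[m+\frac12,\infty)$ by concavity. On $[1,m+\frac12]$ the convex $h$ lies below its chord, and both endpoint values $h(1)=0$ and $h(m+\frac12)$ lie below $\ell$, so the chord does too.

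One correction: concavity on $[\delta,\infty)$ needs only $\delta\ge m+\frac12$. That is not ``exactly the case condition'' $\delta\ge 2m+1$. The binding requirement is $\ell(1)\ge0$, i.e.\ $\delta\ge 2m$.

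The genuine difference from the paper is how the spectral sum is bounded.
\begin{itemize}
\item The paper uses the interpolation inequality $\|(SA)^{\beta/2}v\|_{S^{-1}}\le\delta^{(1-\beta)/2}\|v\|_a$. It then minimizes $\delta^{(1-\beta)/2}\max_{t\in[0,1]}t^{(1-\beta)/2}(1-t)^m$ over $\beta\in[0,1]$, following Bank--Douglas. In your variable $s=1/\lambda$, this majorizes $h$ by power functions $C s^{1-\beta}$ and applies Jensen's inequality. The optimal exponent $1-\beta=2m/(\delta-1)$ is admissible exactly when $m\le(\delta-1)/2$, which is where the case split comes from.
\item You instead majorize $h$ by an affine function of $s$. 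This exhibits a dual certificate for the linear program over spectral distributions with $\int s\,d\mu\le\delta$.
\end{itemize}
Your route costs a convex/concave shape analysis of $h$, which the paper's one-parameter argument avoids. In return it is sharp. The rate $(1-1/\delta)^m$ is attained when $e_0$ is an eigenfunction with eigenvalue $1/\delta$, and it holds on the slightly larger range $m\le\delta/2$. Using instead the line through $(1,0)$ tangent to $h$ at $s=2m$ would also improve the constant in the second regime.
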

\begin{proof}
Recall that $\|\tilde{u}_h-R_mu_h\|_a$ is the error of the iterative method in Algorithm \ref{alg:smoothing} for $A\tilde{u}_h=\tilde{f}$ based on the smoother $S$ and the initial guess $u_h$. The smoothing error is related to the initial error $v:=\tilde{u}_h-u_h$ via \begin{equation*}
    \tilde{u}_h-R_mu_h=(I-SA)^mv.
\end{equation*}
Using a triangle inequality, we have
\begin{equation}\label{eq:triangle}
\begin{aligned}
\|u-R_mu_h\|_a&\leq\|u-\tilde{u}_h\|_a+\|\tilde{u}_h-R_mu_h\|_a\\
&\leq\|u-\tilde{u}_h\|_a+\|(I-SA)^mv\|_a.
\end{aligned}    
\end{equation}

As a consequence of \eqref{eq:Holder} and the assumption $\|v\|_{S^{-1}}\leq \sqrt{\delta}\|v\|_a$, we obtain
\begin{equation}\label{eq:vbeta_v1}
\!|\!|\!|v\,\!|\!|\!|_\beta\leq\!|\!|\!|v\,\!|\!|\!|^{\beta}_1\,\!|\!|\!|v\,\!|\!|\!|_0^{1-\beta}=\|v\|^{\beta}_a\,\|v\|_{S^{-1}}^{1-\beta}\leq \delta^{\frac{1-\beta}{2}}\|v\|_a.    
\end{equation}
We proceed as follows.
\begin{equation}\label{eq:EmA}
   \|(I-SA)^mv\|_a=\|(SA)^{\frac{1}{2}}(I-SA)^m(SA)^{-\frac{\beta}{2}}(SA)^{\frac{\beta}{2}}v\|_{S^{-1}}.
\end{equation}
The assumption \eqref{eq:SA_spectrum} ensures that $\sigma(SA)\subset(0,1]$. It then follows from \eqref{eq:EmA},  \eqref{eq:pAnorm} and \eqref{eq:vbeta_v1} that 
\begin{equation}\label{eq:ISA}
\begin{aligned}
\|(I-SA)^mv\|_a
    &\leq\max_{t\in[0,1]}[t^{\frac{1-\beta}{2}}(1-t)^m]\|(SA)^{\frac{\beta}{2}}v\|_{S^{-1}}\\
    &\leq \delta^{\frac{1-\beta}{2}}\max_{t\in[0,1]}[t^{\frac{1-\beta}{2}}(1-t)^m]\|v\|_a.
\end{aligned}
\end{equation}
It remains to find the optimal value of $\beta\in[0,1]$ such that the $\beta$-dependent quantity in \eqref{eq:ISA} is minimal (cf.~\cite[proof of Theorem 4]{BankDouglas1985}):
\begin{align*}
&\min_{\beta\in[0,1]} \big\{\delta^{\frac{1}{2}-\frac{\beta}{2}}\max_{t\in[0,1]}[(1-t)^{\frac{1-\beta}{2}}t^m]\big\}\\
&=\min_{\beta\in[0,1]} \big\{\delta^{\beta/2}f(\beta/2,m)\big\}=\varepsilon_m(\delta).  
\end{align*}
Combining \eqref{eq:triangle} and  \eqref{eq:ISA} then yields
\begin{align*}
\|u-R_mu_h\|_a&\leq\|u-\tilde{u}_h\|_a+\varepsilon_m(\delta)\|v\|_a\\
&\leq\|u-\tilde{u}_h\|_a+\varepsilon_m(\delta)\|u-\tilde{u}_h\|_a+\varepsilon_m(\delta)\|u-u_h\|_a.
\end{align*}
Noting that $\varepsilon_m(\delta)\leq1$ completes the proof. 
\end{proof}

In the classical literature \cite{BankDouglas1985,BankXu2003b}, $\varepsilon_m(\delta)$ is called the smoothing rate, which is a non-increasing function in $m$. Due to Theorem \ref{thm:main}, we need to set $\widetilde{V}$ as a higher order FE space to obtain order of superconvergence for $\|u-R_mu_h\|_a$. The factor $\varepsilon_m(\delta)$ exponentially converges to zero for the first several steps of smoothing passes. The smoothing rate $\varepsilon_m(\delta)$ is due to the efficiency of $S$ for damping the high frequency function $\tilde{u}_h-u_h$.

\begin{remark}
A natural question is whether the following error bound holds true for arbitrary smoothing step $m\in\mathbb{N}$:
\begin{equation*}
\|u-R_mu_h\|_a\lesssim\|u-\tilde{u}_h\|_a+\alpha^m\|u-u_h\|_a,
\end{equation*}
where $0<\alpha<1$ is a uniform contraction factor. The answer is negative because smoothing iteration is only exponentially convergent at the first several steps, e.g., $m\leq4$. In fact, it may take thousands of smoothing passes for $R_mu_h$ to be very close to the higher order FE solution $\tilde{u}_h$, see Figure \ref{fig:P1P2_GS_CG} for an illustration.
\end{remark}

Let $\widetilde{V}=\sum_{j=1}^J\widetilde{V}_j$ be a subspace decomposition. Here we consider smoother either given by the parallel subspace correction method (Algorithm \ref{alg:Sa}) or by the successive subspace correction method (Algorithm \ref{alg:Sm}), see \cite{Xu1992,XuZikatanov2002}.  

\begin{algorithm}[thbp]
\caption{Additive smoother $S_{\rm add}$}\label{alg:Sa}
\begin{algorithmic}
\STATE \textbf{Input}:  $r\in \widetilde{V}^\prime$;

\FOR{$j=1:J$}
    \STATE find $e_j\in \widetilde{V}_j$ such that $a(e_j,v_j)=r(v_j),\quad\forall v_j\in \widetilde{V}_j$;
\ENDFOR

\STATE \textbf{Output}:  $S_{\rm add}r=\sum_{j=1}^Je_j$.

\end{algorithmic}
\end{algorithm}

\begin{algorithm}[thbp]
\caption{Multiplicative smoother $S_{\rm mult}$}\label{alg:Sm}
\begin{algorithmic}
\STATE \textbf{Input}:  $r\in \widetilde{V}^\prime$;

\STATE set $e_0=0$;

\FOR{$j=1:J$}
    \STATE find $\eta_j\in \widetilde{V}_j$ such that $a(\eta_j,v_j)=r(v_j)-a(e_{j-1},v_j),\quad\forall v_j\in \widetilde{V}_j$;
\STATE set $e_j=e_{j-1}+\eta_j$;
\ENDFOR

\STATE \textbf{Output}: $S_{\rm mult}r=e_J$.

\end{algorithmic}
\end{algorithm}

By reversing the order of the for-loop in Algorithm \ref{alg:Sm}, we obtain a backward multiplicative smoother (denoted by $S_{\rm mult}^t$). A combination of $S_{\rm mult}$ and $S_{\rm mult}^t$ yields a symmetrized multiplicative smoother $\bar{S}_{\rm mult}: \widetilde{V}^\prime\rightarrow\widetilde{V}$ as follows:
\begin{equation}\label{eq:Sbar_mult}
    \bar{S}_{\rm mult}=S_{\rm mult}+S_{\rm mult}^t-S_{\rm mult}^tAS_{\rm mult}.
\end{equation}

For each $1\leq j\leq J$, let $I_j: \widetilde{V}_j\rightarrow \widetilde{V}$ be the inclusion. Each $\widetilde{V}_j$ corresponds to an SPD operator $A_j: \widetilde{V}_j\rightarrow \widetilde{V}_j^\prime$ with  $\langle A_jv_j,w_j\rangle=a(v_j,w_j),~\forall v_j, w_j\in \widetilde{V}_j$.
The additive smoother in Algorithm \ref{alg:Sa} is written as $S_{\rm add}=\sum_{j=1}^JI_jA_j^{-1}I_j^\prime$,
which satisfies the well-known additive preconditioning formula (cf.~\cite{BrennerScott2008})
\begin{equation}\label{eq:Sadd_inverse}
    \langle S_{\rm add}^{-1}v,v\rangle= \inf_{\sum_{j=1}^Jv_j=v}\sum_{j=1}^J\langle A_jv_j,v_j\rangle=\inf_{\sum_{j=1}^Jv_j=v}\sum_{j=1}^J\|v_j\|^2_a.
\end{equation}
The infimum is taken over all possible decompositions of the form  $\sum_{j}v_j$ with each $v_j\in \widetilde{V}_j$. Let $P_j$ be the $a(\bullet,\bullet)$-projection onto $\widetilde{V}_j$. It is shown in \cite{Zikatanov2008} that 
\begin{equation}\label{eq:Sm_operator}
    \langle \bar{S}_{\rm mult}^{-1}v,v\rangle=\inf_{\substack{\sum_{j=1}^Jv_j=v\\v_j\in \widetilde{V}_j}}\sum_{i=1}^J\Big\|P_i\sum_{k\geq i}v_k\Big\|^2_a.
\end{equation}
The analysis for $\bar{S}_{\rm mult}$ relies on the quantity
\[
M:=\max_{1\leq j\leq J}\#\big\{1\leq i\leq J: P_i\widetilde{V}_j\neq\{0\}\big\},
\]
which will be determined by the mesh regularity when solving concrete PDEs. 

We formulate the next superconvergence corollary based on two assumptions about subspace decomposition, which is commonly used in the literature and can be easily verified case by case. 
\begin{assumption}\label{assump:stable}
Let $\widetilde{V}=V\oplus V^\perp$ where $V^\perp$ is the orthogonal complement under $a(\bullet,\bullet)$. There exists a constant $C_{\rm st}>0$ such that
\begin{equation*}
        \forall v\in \widetilde{V}\cap V^\perp,~\exists v_j\in \widetilde{V}_j\text{ s.t. }     v=\sum_{j=1}^Jv_j~\&~\sum_{j=1}^J\|v_j\|_a^2\leq C_{\rm st}\|v\|_a^2.
\end{equation*}
\end{assumption}

\begin{assumption}\label{assump:regular}
There exists a constant $C_{\rm reg}>0$ such that
\begin{equation*}
\forall v\in \widetilde{V},~\forall v_j\in \widetilde{V}_j\text{ with }v=\sum_{j=1}^Jv_j\Longrightarrow\|v\|_a^2\leq C_{\rm reg}\sum_{j=1}^J\|v_j\|^2_a.
\end{equation*}
\end{assumption}

\begin{corollary}\label{cor:main}
Let Assumptions \ref{assump:stable} and \ref{assump:regular} be true. 
Let  $\omega\in(0,C_{\rm reg}^{-1}]$ be a damping factor.
Then for Algorithm \ref{alg:smoothing} with $S=\omega S_{\rm add}$ or $S=\bar{S}_{\rm mult}$, it holds that
\begin{equation*}
\|u-R_mu_h\|_a\leq2\|u-\tilde{u}_h\|_a+\varepsilon_m\|u-u_h\|_a,
\end{equation*}
where $\varepsilon_m=\varepsilon_m(\delta)$ is defined in Theorem \ref{thm:main} with $\delta=C_{\rm st}/\omega$ for $S=\omega S_{\rm add}$ and $\delta=C_{\rm st}M^2$ for $S=\bar{S}_{\rm mult}$. 
\end{corollary}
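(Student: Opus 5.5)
The plan is to apply Theorem~\ref{thm:main} directly, so it suffices to verify its two hypotheses \eqref{eq:SA_spectrum} and \eqref{eq:high_frequency} for each smoother with the stated $\delta$. The first observation is that the initial error $v:=\tilde{u}_h-u_h$ lies in $\widetilde{V}\cap V^\perp$. This follows from Galerkin orthogonality: for $w\in V$ we have $a(\tilde{u}_h-u_h,w)=\tilde f(w)-f(w)=0$, because $\tilde f|_V=f$. Hence Assumption~\ref{assump:stable} applies to $v$. That is why only the complement $V^\perp$, rather than all of $\widetilde V$, has to be stably decomposed.

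\emph{Additive case $S=\omega S_{\rm add}$.} Combining \eqref{eq:Sadd_inverse} with Assumption~\ref{assump:regular}, every $w\in\widetilde V$ satisfies
\[
\langle S^{-1}w,w\rangle=\omega^{-1}\inf_{\sum_j w_j=w}\sum_j\|w_j\|_a^2\ge \omega^{-1}C_{\rm reg}^{-1}\|w\|_a^2\ge\|w\|_a^2,
\]
using $\omega\le C_{\rm reg}^{-1}$. Thus $(SAw,w)_{S^{-1}}=\|w\|_a^2\le\|w\|_{S^{-1}}^2$, and the Rayleigh quotient gives $\lambda_{\max}(SA)\le1$. For \eqref{eq:high_frequency} I take the particular decomposition of $v$ from Assumption~\ref{assump:stable} in the infimum. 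This gives $\|v\|_{S^{-1}}^2\le\omega^{-1}C_{\rm st}\|v\|_a^2$, i.e.\ $\delta=C_{\rm st}/\omega$. As a side check, $\delta\ge C_{\rm st}C_{\rm reg}\ge1$ follows by chaining the two assumptions; the borderline case $\delta=1$, if it occurs, is harmless.

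\emph{Multiplicative case $S=\bar S_{\rm mult}$.} For the spectral bound I use the standard factorization $I-\bar S_{\rm mult}A=(I-S_{\rm mult}^tA)(I-S_{\rm mult}A)=E^*E$. Here $E=I-S_{\rm mult}A=\prod_j(I-P_j)$, and $E^*$ is its $a$-adjoint, which follows from \eqref{eq:Sbar_mult}. Hence $a((I-\bar S A)w,w)=\|Ew\|_a^2\ge0$, so $\bar SA\le I$ in the $a$-inner product and $\lambda_{\max}(\bar SA)\le1$. Symmetry and positive-definiteness of $\bar S$ are implicit in \eqref{eq:Sm_operator}, since the $\widetilde V_j$ span $\widetilde V$.

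For \eqref{eq:high_frequency}, I insert the stable decomposition $v=\sum_k v_k$ into \eqref{eq:Sm_operator}. The sum $P_i\sum_{k\ge i}v_k$ only involves indices $k$ with $P_i\widetilde V_k\ne\{0\}$. This relation is symmetric in $i,k$, since both conditions are equivalent to $a(\widetilde V_i,\widetilde V_k)\ne0$, so each sum has at most $M$ terms. Using $\|P_i\|\le1$ and Cauchy--Schwarz,
\[
\Big\|P_i\sum_{k\ge i}v_k\Big\|_a^2\le M\sum_{k:\,P_i\widetilde V_k\neq\{0\}}\|v_k\|_a^2 .
\]
Summing over $i$, each $k$ appears in at most $M$ of these sums. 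This gives $\|v\|_{\bar S^{-1}}^2\le M^2\sum_k\|v_k\|_a^2\le M^2C_{\rm st}\|v\|_a^2$, i.e.\ $\delta=C_{\rm st}M^2$.

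The main obstacle is this counting step. One must check that the symmetric use of $M$ (rows versus columns of the interaction pattern) is legitimate, so that the double-counting yields exactly $M^2$. One must also confirm the factorization identity behind $\lambda_{\max}(\bar S_{\rm mult}A)\le1$. With both hypotheses verified, Theorem~\ref{thm:main} yields the stated estimate.
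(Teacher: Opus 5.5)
Your proof is correct and follows the paper's route. You show $v=\tilde u_h-u_h\in\widetilde V\cap V^\perp$, then insert the decomposition from Assumption~\ref{assump:stable} into \eqref{eq:Sadd_inverse} and \eqref{eq:Sm_operator} to obtain $\delta=C_{\rm st}/\omega$ and $\delta=C_{\rm st}M^2$, and use Assumption~\ref{assump:regular} with $\omega\le C_{\rm reg}^{-1}$ for $\lambda_{\max}(SA)\le1$ before invoking Theorem~\ref{thm:main}. Your explicit double-counting, via the symmetry of $P_i\widetilde V_k\neq\{0\}$, and the factorization $I-\bar S_{\rm mult}A=E^*E$ supply details the paper only asserts. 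The factorization is in fact the correct justification of $\lambda_{\max}(\bar S_{\rm mult}A)\le1$, since the paper's appeal to $\|I-\bar S_{\rm mult}A\|_A<1$ by itself only places the eigenvalues in $(0,2)$.
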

\begin{proof}
First we consider the case $S=\omega S_{\rm add}$. By definition, $u_h$ is the $a$-orthogonal projection of $\tilde{u}_h$ onto $V$ and thus $v:=\tilde{u}_h-u_h\in \widetilde{V}\cap V^\perp$. Using Assumption \ref{assump:stable} and the formula \eqref{eq:Sadd_inverse},
we have  
\begin{equation*}        \|v\|_{S^{-1}}^2=\omega^{-1}\langle S_{\rm add}^{-1}v,v\rangle =\omega^{-1} \inf_{\sum_jv_j=v}\sum_{j=1}^J\|v_j\|_a^2 \leq\omega^{-1}C_{\rm st}\|v\|_a^2,
\end{equation*}
which verifies the assumption \eqref{eq:high_frequency} in Theorem \ref{thm:main} with $\delta=\omega^{-1}C_{\rm st}$. On the other hand, for any $w\in\widetilde{V}$, combining Assumption \ref{assump:regular} with \eqref{eq:Sadd_inverse} yields 
\begin{equation*}
  \langle Aw,w\rangle=\|w\|_a^2
  \leq C_{\rm reg}\inf_{\sum_jw_j=w}\sum_{j=1}^J\|w_j\|^2_a
  =C_{\rm reg}\langle S_{\rm add}^{-1}w,w\rangle.
\end{equation*}
In what follows, $\lambda_{\max}(S_{\rm add}A)\leq C_{\rm reg}$ and the assumption \eqref{eq:SA_spectrum} is true.

Second, we consider the case $S=\bar{S}_{\rm mult}$. Using the Assumption~\ref{assump:stable} and the multiplicative preconditioning formula \eqref{eq:Sm_operator}, we have 
\begin{equation*}
    \begin{aligned}
        \|v\|_{S^{-1}}^2&=\inf_{\sum_jv_j=v}\sum_{i=1}^J\Big\|P_i\sum_{k\geq i}v_k\Big\|^2_a\\
        &\leq M^2\inf_{\sum_jv_j=v}\sum_{j=1}^J\|v_j\|_a^2\leq M^2C_{\rm st}\|v\|_a^2
    \end{aligned}
\end{equation*}
Therefore, the assumption \eqref{eq:high_frequency} with $S=\bar{S}_{\rm mult}$ still holds. For the multiplicative smoother, the eigenvalue estimate $\lambda_{\max}(\bar{S}_{\rm mult}A)<1$ is always true due to the contraction $\|I-\bar{S}_{\rm mult}A\|_A<1$. 

Therefore, we have verified the two assumptions in Theorem \ref{thm:main} for both additive and multiplicative smoothers. The proof then follows from Theorem \ref{thm:main}.
\end{proof}

A drawback of $\bar{S}_{\rm mult}$ is its successive nature that prevents parallel implementation. On the other hand, the cheaper and parallel operator $S_{\rm add}$  requires a sufficiently small damping factor $\omega$, while the optimal value of $\omega$ is generally not clear. The next theorem shows that $S_{\rm add}$ combined with PCG leads to smoothing superconvergence without using a damping factor.  
\begin{theorem}\label{thm:PCG}
Let $S: \widetilde{V}^\prime\rightarrow\widetilde{V}$ be SPD with $\lambda_{\max}(SA)\leq\lambda$ and assume that \begin{equation*}
 \|\tilde{u}_h-u_h\|_{S^{-1}}\leq \sqrt{\delta}\|\tilde{u}_h-u_h\|_a, 
\end{equation*}
where $\delta>0$ is a constant. 
Then for Algorithm \ref{alg:PCG}, it holds that 
\begin{equation*}
\|u-R_mu_h\|_a\leq(1+\varepsilon_m)\|u-\tilde{u}_h\|_a+\varepsilon_m\|u-u_h\|_a,
\end{equation*} 
where $\varepsilon_m=\sqrt{\lambda\delta}/(2m+1)$.
\end{theorem}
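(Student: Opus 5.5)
The plan mirrors the proof of Theorem~\ref{thm:main}, replacing the fixed polynomial $(1-t)^m$ by the optimal polynomial that PCG implicitly selects. Set $v:=\tilde{u}_h-u_h$. By the standard Krylov characterization of PCG, the iterate $u_m$ of Algorithm~\ref{alg:PCG} minimizes the $A$-norm error over $u_h+\mathrm{span}\{SAv,\ldots,(SA)^mv\}$, because $r_0=\tilde{f}-Au_h=Av$. Hence
\begin{equation*}
\|\tilde{u}_h-R_mu_h\|_a=\min_{p\in\mathcal{P}_m,\,p(0)=1}\|p(SA)v\|_a .
\end{equation*}
The triangle inequality \eqref{eq:triangle} then reduces the claim to the bound $\|\tilde u_h-R_mu_h\|_a\le\varepsilon_m\|v\|_a$. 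Once that holds, $\|v\|_a\le\|u-\tilde{u}_h\|_a+\|u-u_h\|_a$ gives exactly the stated $(1+\varepsilon_m)\|u-\tilde u_h\|_a+\varepsilon_m\|u-u_h\|_a$.

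To bound the minimum, I take $\beta=0$ in the argument of \eqref{eq:EmA}--\eqref{eq:ISA}. For any admissible $p$,
\begin{equation*}
\|p(SA)v\|_a=\|(SA)^{1/2}p(SA)v\|_{S^{-1}}\le\max_{t\in[0,\lambda]}|t^{1/2}p(t)|\,\|v\|_{S^{-1}}\le\sqrt{\delta}\max_{t\in[0,\lambda]}|t^{1/2}p(t)|\,\|v\|_a .
\end{equation*}
The first inequality uses \eqref{eq:pAnorm}, which applies because $t^{1/2}p(t)$ is a product of a power function and a polynomial, together with $\sigma(SA)\subset(0,\lambda]$. The second uses the hypothesis $\|v\|_{S^{-1}}\le\sqrt{\delta}\|v\|_a$. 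Rescaling $t=\lambda s$, it then suffices to construct $p\in\mathcal{P}_m$ with $p(0)=1$ and
\begin{equation*}
\max_{s\in[0,1]}|s^{1/2}p(s)|\le\frac{1}{2m+1}.
\end{equation*}
Substituting back gives $\sqrt{\lambda}\cdot\sqrt{\delta}/(2m+1)=\varepsilon_m$.

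The main obstacle is constructing this polynomial, which is the classical extremal problem from the CG smoothing analysis. I would write $s=\sin^2(\theta/2)$ with $\theta\in[0,\pi]$, so that $s^{1/2}=\sin(\theta/2)$. I then define $p$ through
\begin{equation*}
s^{1/2}p(s)=\frac{(-1)^m}{2m+1}\sin\big((2m+1)\tfrac{\theta}{2}\big),
\end{equation*}
which is $\frac{(-1)^m}{2m+1}T_{2m+1}(\sqrt{s})$ up to sign conventions; note that $\cos\theta=1-2s$. Since $\sin((2m+1)x)/\sin x$ is a polynomial of degree $m$ in $\sin^2x$, $p$ is a polynomial of degree $m$ in $s$. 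Letting $x=\theta/2\to0$ gives $p(0)=1$ after the sign/normalization check, and the sup bound $1/(2m+1)$ follows from $|\sin|\le1$. The step I expect to need the most care is verifying the degree, the normalization $p(0)=1$, and the sign; the rest is routine.
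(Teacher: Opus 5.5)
Your proposal is correct and follows the paper's proof: the same energy-minimization (Krylov) characterization of PCG, the same bound $\|p(SA)v\|_a\le\sqrt{\lambda\delta}\,\max_{s\in[0,1]}|s^{1/2}p(s)|\,\|v\|_a$, and the same Chebyshev-based polynomial $\hat{p}_m$ with value $1/(2m+1)$. One sign fix is needed: since $T_{2m+1}(\sin x)=(-1)^m\sin((2m+1)x)$, you should define $s^{1/2}p(s)=\frac{1}{2m+1}\sin\big((2m+1)\tfrac{\theta}{2}\big)$ without the factor $(-1)^m$ (your version gives $p(0)=(-1)^m$), and this equals the paper's $\frac{(-1)^m}{2m+1}T_{2m+1}(\sqrt{s})$.
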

\begin{proof} 
In view of the proof of Theorem \ref{thm:main}, it suffices to estimate $\tilde{u}_h-R_mu_h$ with $R_mu_h$ being the output of the $m$-step PCG iterations for $A\tilde{u}_h=\tilde{f}$ with initial guess $u_h$. Let $r_0=\tilde{f}-Au_h\in \widetilde{V}^\prime$ and $v=\tilde{u}_h-u_h=A^{-1}r_0$. The PCG iterate $R_mu_h$ solves the minimization problem (cf.~\cite{LiZikatanovZuo2024SISC,LiZikatanovZuo2025})
\begin{equation*}
    \|\tilde{u}_h-R_mu_h\|_a
    =\min_{w\in u_h+\mathcal{K}_m}\|\tilde{u}_h-w\|_a
\end{equation*}
over the Krylov subspace  \begin{align*}
\mathcal{K}_m&={\rm span}\{Sr_0,(SA)Sr_0,\ldots,(SA)^{m-1}Sr_0\}\\
&={\rm span}\{SAv,(SA)^2v,\ldots,(SA)^mv\}.
\end{align*}
In what follows, we have
\begin{equation}\label{eq:CGerror}
\begin{aligned}
\|\tilde{u}_h-R_mu_h\|_a
    &=\min_{p_m(0)=1}\|p_m(SA)v\|_A\\
    &=\min_{p_m(0)=1}\|(SA)^{1/2}p_m(SA)v\|_{S^{-1}}\\&\leq\min_{p_m(0)=1}\max_{t\in(0,\lambda]}|t^{1/2}p_m(t)|\,\|v\|_{S^{-1}}\\
    &\leq(\lambda\delta)^{1/2}\min_{p_m(0)=1} \max_{t\in[0,1]}|t^{1/2}p_m(t)|\,\|v\|_a,
\end{aligned}
\end{equation}
where $p_m$ is taken over all polynomials of degree $\leq m$. The optimizer of the mini-max problem in \eqref{eq:CGerror} is $\hat{p}_m(t)=\frac{T_{2m+1}(\sqrt{t})}{(-1)^m(2m+1)\sqrt{t}}$ with $T_k$ being the $k$-th degree Chebyshev polynomial of the first kind. The optimum is $\min_{p_m(0)=1} \max_{t\in[0,1]}|t^{1/2}p_m(t)|=1/(2m+1)$. 
Therefore, inserting $p_m=\hat{p}_m$ into \eqref{eq:CGerror} completes the proof.
\end{proof}

With the help of assumptions about subspace decomposition of $\widetilde{V}$, we reformulate Theorem \ref{thm:PCG} as the next corollary.
\begin{corollary}\label{cor:PCG}
Let Assumptions \ref{assump:stable} and \ref{assump:regular} be true. Then for Algorithm \ref{alg:PCG} with $S=S_{\rm add}$ or $S=\bar{S}_{\rm mult}$, it holds that 
\begin{equation*}
\|u-R_mu_h\|_a\leq(1+\varepsilon_m)\|u-\tilde{u}_h\|_a+\varepsilon_m\|u-u_h\|_a,
\end{equation*} 
where $\varepsilon_m=\sqrt{C_{\rm reg} C_{\rm st}}/(2m+1)$ for $S=S_{\rm add}$ and $\varepsilon_m=\sqrt{C_{\rm st}M^2}/(2m+1)$ for $S=\bar{S}_{\rm mult}$.
\end{corollary}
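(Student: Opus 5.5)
The corollary follows from Theorem~\ref{thm:PCG} once we supply, for each choice of $S$, a bound $\lambda$ on $\lambda_{\max}(SA)$ and a constant $\delta$ for the high-frequency inequality $\|\tilde u_h-u_h\|_{S^{-1}}\le\sqrt{\delta}\|\tilde u_h-u_h\|_a$. With these in hand, $\varepsilon_m=\sqrt{\lambda\delta}/(2m+1)$ gives the stated constants. Both ingredients were already computed in the proof of Corollary~\ref{cor:main}; there they were used to verify the hypotheses of Theorem~\ref{thm:main}, and here we reuse them without the damping factor.

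\emph{Case $S=S_{\rm add}$.} Set $v=\tilde u_h-u_h$. Since $u_h$ is the $a$-projection of $\tilde u_h$ onto $V$, we have $v\in\widetilde V\cap V^\perp$. Assumption~\ref{assump:stable} together with \eqref{eq:Sadd_inverse} then gives
\[
\|v\|_{S_{\rm add}^{-1}}^2=\inf_{\sum_j v_j=v}\sum_{j}\|v_j\|_a^2\le C_{\rm st}\|v\|_a^2,
\]
so $\delta=C_{\rm st}$. For arbitrary $w\in\widetilde V$, Assumption~\ref{assump:regular} with \eqref{eq:Sadd_inverse} yields $\langle Aw,w\rangle\le C_{\rm reg}\langle S_{\rm add}^{-1}w,w\rangle$. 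By the Rayleigh quotient characterization of the eigenvalues of $S_{\rm add}A$ in the $S_{\rm add}^{-1}$ inner product, this means $\lambda_{\max}(S_{\rm add}A)\le C_{\rm reg}$, so $\lambda=C_{\rm reg}$. Hence $\sqrt{\lambda\delta}=\sqrt{C_{\rm reg}C_{\rm st}}$. No damping factor is needed, because PCG is invariant under scaling $S$ by a positive constant; this is also reflected in the product $\lambda\delta$, which is scale-invariant.

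\emph{Case $S=\bar S_{\rm mult}$.} First, $\bar S_{\rm mult}$ is SPD. Symmetry is clear from \eqref{eq:Sbar_mult}, and positive-definiteness follows from \eqref{eq:Sm_operator}, since a finite positive quadratic form defines the inverse. Next, \eqref{eq:Sm_operator} combined with the counting bound $\sum_i\|P_i\sum_{k\ge i}v_k\|_a^2\le M^2\sum_j\|v_j\|_a^2$ and Assumption~\ref{assump:stable} gives $\delta=C_{\rm st}M^2$.

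The counting bound is the only step needing care. For each $i$, the sum $P_i\sum_{k\ge i}v_k$ contains at most $M$ nonzero terms, and Cauchy--Schwarz bounds its square by $M\sum_k\|P_iv_k\|_a^2$. Each $v_k$ then appears for at most $M$ indices $i$, with $\|P_iv_k\|_a\le\|v_k\|_a$, which produces the second factor of $M$. Here we read the definition of $M$ symmetrically; this is exactly as in the proof of Corollary~\ref{cor:main}, which we take as given.

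Finally, $\|I-\bar S_{\rm mult}A\|_A<1$ is the contraction property of the symmetrized successive subspace correction. Since $I-\bar S_{\rm mult}A=(I-S_{\rm mult}^tA)(I-S_{\rm mult}A)$ is $A$-symmetric positive semidefinite, its spectrum lies in $[0,1)$. Hence $\sigma(\bar S_{\rm mult}A)\subset(0,1]$, and we may take $\lambda=1$, giving $\sqrt{\lambda\delta}=\sqrt{C_{\rm st}M^2}$.

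Inserting $(\lambda,\delta)$ into Theorem~\ref{thm:PCG} completes the argument in both cases. I expect no genuine obstacle: the main points to watch are that Theorem~\ref{thm:PCG} needs only $\lambda_{\max}(SA)\le\lambda$ with no normalization to $1$, and that the SPD property of $\bar S_{\rm mult}$ is in place so that Algorithm~\ref{alg:PCG} is well defined.
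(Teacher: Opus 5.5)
Your proposal is correct and follows the paper's proof. Like the paper, you take $\delta=C_{\rm st}$, $\lambda=C_{\rm reg}$ for $S_{\rm add}$ and $\delta=C_{\rm st}M^2$, $\lambda=1$ for $\bar S_{\rm mult}$ from the proof of Corollary~\ref{cor:main} and insert them into Theorem~\ref{thm:PCG}. You also spell out details the paper leaves implicit: the $M^2$ counting bound and $\lambda_{\max}(\bar S_{\rm mult}A)\le 1$ via $I-\bar S_{\rm mult}A=(I-S_{\rm mult}^tA)(I-S_{\rm mult}A)$.

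Your symmetric reading of $M$ needs no extra assumption. The condition $P_i\widetilde V_k\neq\{0\}$ holds exactly when $\widetilde V_i$ and $\widetilde V_k$ are not $a$-orthogonal, and that relation is symmetric in $i$ and $k$.
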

\begin{proof}
Recall that in the proof of Corollary \ref{cor:main}, it was shown that $\lambda_{\max}(S_{\rm add}A)\leq C_{\rm reg}$ and $\lambda_{\max}(\bar{S}_{\rm mult}A)\leq 1$. The rest of the proof is the same as in Corollary \ref{cor:main} and follows from Theorem \ref{thm:PCG}.
\end{proof}

\section{Examples of Smoothing Superconvergence}\label{sect:examples}
In this section, we show the applications of Corollaries \ref{cor:main} and \ref{cor:PCG} to several model PDEs including the Poisson, curl-curl, and biharmonic equations. The smoothing rate $\varepsilon_m$ is either defined in Corollary \ref{cor:main} or Corollary \ref{cor:PCG} according to the context.

In the following applications, the subspace $\widetilde{V}_j$ consists of FE functions supported on local patches. A consequence is that $C_{\rm reg}=\mathcal{O}(1)$ in Assumption \ref{assump:regular} and $M=\mathcal{O}(1)$ in Corollary \ref{cor:main} are simply determined by the shape-regularity of the underlying mesh $\mathcal{T}_h$. Therefore, we shall not restate the verification of Assumption \ref{assump:regular} in proofs of the following theoretical results.

\subsection{Poisson Equation}\label{subsect:Poisson}
On a Lipschitz domain  $\Omega\subset\mathbb{R}^d$, we consider the Poisson boundary value problem 
\begin{equation}\label{eq:Poisson}
        -\Delta u=f\text{ in }\Omega,\qquad
        u=0\text{ on }\partial\Omega.
\end{equation}
For an integer $k\geq1$, let $\mathcal{P}_k$ be the space of polynomials of degree $\leq k$. Let $V\subset H_0^1(\Omega)$ be the FE space of continuous and piecewise $\mathcal{P}_k$ polynomials. Let $(\bullet,\bullet)=(\bullet,\bullet)_{\Omega}$ denote the $L^2(\Omega)$ inner product.  The FE method for \eqref{eq:Poisson} seeks $u_h\in V$ such that
\begin{equation}\label{eq:Poisson_FEM}
a(u_h,v_h)=(\nabla u_h,\nabla v_h)=(f,v_h), \quad \forall v_h\in V.
\end{equation} 

Consider the one-order-higher FE space 
\[
\widetilde{V}=\big\{v_h\in C(\overline{\Omega}): v_h|_T\in\mathcal{P}_{k+1}~\forall T\in\mathcal{T}_h,\, v_h|_{\partial\Omega}=0\big\}.
\]
Let $\{a_j\}_{1\leq j\leq\widetilde{N}}$ be the nodal set of $\mathcal{P}_{k+1}$ FE in $\mathcal{T}_h$ and $\phi_j\in \widetilde{V}$ the nodal basis function associated to $a_j$, i.e., $\phi_j(a_k)=\delta_{jk}$. Then $A$ corresponds to the $\mathcal{P}_{k+1}$ FE stiffness matrix $\mathbf{A}$, and $\tilde{f}\in \widetilde{V}^\prime$ is represented by the vector $\tilde{\mathbf{f}}=((f,\phi_i))_{1\leq i\leq\widetilde{N}}$. For the Poisson equation, we use the 1D or pointwise subspace decomposition  
\begin{equation}\label{eq:pointwise}
\widetilde{V}=\sum_{j=1}^{\widetilde{N}}\widetilde{V}_j,\qquad \widetilde{V}_j={\rm span}\{\phi_j\}. 
\end{equation}

Let $\mathbf{A}=\mathbf{D}-\mathbf{L}-\mathbf{L}^\top$ with $\mathbf{D}$ being the diagonal and $-\mathbf{L}$ being lower triangular part of $\mathbf{A}$, respectively.
For the Poisson equation, the additive smoother $S_{\rm add}$ in Algorithm \ref{alg:Sa} based on \eqref{eq:pointwise} is reduced to a Jacobi preconditioner, which is represented by the matrix $\mathbf{S}_{\rm add}=\mathbf{D}^{-1}$ (denoted by  $S_{\rm add}\sim\mathbf{S}_{\rm add}$).  The symmetrized multiplicative smoother $\bar{S}_{\rm mult}$  in Algorithm \ref{alg:Sm} based on \eqref{eq:pointwise} is represented by the following matrix (denoted by $\bar{S}_{\rm mult}\sim\bar{\mathbf{S}}_{\rm mult}$)
\begin{align*}
\bar{\mathbf{S}}_{\rm mult}&=(\mathbf{D}-\mathbf{L})^{-1}+(\mathbf{D}-\mathbf{L})^{-\top}-(\mathbf{D}-\mathbf{L})^{-\top}\mathbf{A}(\mathbf{D}-\mathbf{L})^{-1}\\
&=(\mathbf{D}-\mathbf{L}^\top)^{-1}\mathbf{D}(\mathbf{D}-\mathbf{L})^{-1},    
\end{align*}
namely, one sweep of symmetrized GS iteration.

By $A_1\lesssim A_2$ we mean $A_1\leq CA_2$ with $C>0$ being a generic uniform constant independent of $h$. Let $A_1\eqsim A_2$ denote $A_1\lesssim A_2\lesssim A_1$. We present a superconvergence estimate of continuous FEs for Poisson's equation in the next theorem.
\begin{theorem}\label{thm:Poisson_C0FEM}
Let $\Omega$ be a convex domain and $u_h\in V$ be the $\mathcal{P}_k$-FE solution for \eqref{eq:Poisson}. Let $R_mu_h$ be the output of either: (1) Algorithm \ref{alg:smoothing} with $S\sim \omega\mathbf{S}_{\rm add}$ or $\bar{\mathbf{S}}_{\rm mult}$; (2) Algorithm \ref{alg:PCG} with $S\sim\mathbf{S}_{\rm add}$ or $\bar{\mathbf{S}}_{\rm mult}$. It holds that 
    \begin{equation*}
        |u-R_mu_h|_{H^1(\Omega)}\lesssim h^{k+1}|u|_{H^{k+2}(\Omega)}+\varepsilon_mh^k|u|_{H^{k+1}(\Omega)}.
    \end{equation*}
\end{theorem}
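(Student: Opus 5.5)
The plan is to apply Corollaries \ref{cor:main} and \ref{cor:PCG} with the pointwise decomposition \eqref{eq:pointwise}. Assumption \ref{assump:regular} and the bound $M=\mathcal{O}(1)$ follow from the bounded overlap of nodal supports, as remarked at the start of this section. Once the remaining Assumption \ref{assump:stable} is verified with $C_{\rm st}=\mathcal{O}(1)$, both corollaries give
\[
|u-R_mu_h|_{H^1(\Omega)}\lesssim \|u-\tilde{u}_h\|_a+\varepsilon_m\|u-u_h\|_a .
\]
Since $\|\cdot\|_a=|\cdot|_{H^1(\Omega)}$, standard $\mathcal{P}_{k+1}$ and $\mathcal{P}_k$ Céa/interpolation estimates give $|u-\tilde{u}_h|_{H^1}\lesssim h^{k+1}|u|_{H^{k+2}}$ and $|u-u_h|_{H^1}\lesssim h^k|u|_{H^{k+1}}$, which yields the claim. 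Here $\varepsilon_m$ depends only on $C_{\rm st}$, $\omega$, $C_{\rm reg}$ and $M$, so it is $h$-independent.

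The main work is Assumption \ref{assump:stable}. For $v\in\widetilde{V}\cap V^\perp$ we take the nodal decomposition $v=\sum_j v(a_j)\phi_j$. Scaling arguments and quasi-uniformity give $\|\phi_j\|_a^2\eqsim h^{d-2}$. Finite overlap then gives
\[
\sum_j\|v(a_j)\phi_j\|_a^2\eqsim h^{d-2}\sum_j v(a_j)^2\eqsim h^{-2}\|v\|_{L^2(\Omega)}^2 ,
\]
so it suffices to prove the inverse-type Poincaré estimate $\|v\|_{L^2}\lesssim h\|v\|_a$ for $a$-orthogonal complements of $V$ in $\widetilde{V}$. I expect this to be the crucial step.

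For this estimate I would use an Aubin--Nitsche duality argument, which is where convexity of $\Omega$ enters. Let $z\in H_0^1(\Omega)$ solve $-\Delta z=v$. Elliptic regularity on a convex domain gives $|z|_{H^2}\lesssim\|v\|_{L^2}$. Then, for any $z_h\in V$, $a$-orthogonality gives
\[
\|v\|_{L^2}^2=(\nabla z,\nabla v)=(\nabla(z-z_h),\nabla v)\le |z-z_h|_{H^1}\|v\|_a .
\]
Choosing $z_h$ as the Lagrange (or Scott--Zhang) interpolant, valid since $k\ge1$, gives $|z-z_h|_{H^1}\lesssim h|z|_{H^2}\lesssim h\|v\|_{L^2}$. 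Dividing out $\|v\|_{L^2}$ yields $\|v\|_{L^2}\lesssim h\|v\|_a$, so $C_{\rm st}=\mathcal{O}(1)$. Convexity is also used for the error estimates above if one prefers to phrase them through regularity.

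Finally, the matrix smoothers $\omega\mathbf{D}^{-1}$ and $\bar{\mathbf{S}}_{\rm mult}$ coincide with $\omega S_{\rm add}$ and $\bar S_{\rm mult}$ for this decomposition, as noted before the theorem. For $S\sim\omega\mathbf{S}_{\rm add}$ we take $\omega\le C_{\rm reg}^{-1}$ as required in Corollary \ref{cor:main}. This covers all four cases (1)--(2).
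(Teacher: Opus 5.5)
Your proposal is correct and follows the paper's proof essentially step for step. You use the nodal splitting $v=\sum_j v(a_j)\phi_j$ with the scaling bound $\sum_j\|v_j\|_a^2\lesssim h^{-2}\|v\|_{L^2(\Omega)}^2$, then the duality estimate $\|v\|_{L^2(\Omega)}\lesssim h|v|_{H^1(\Omega)}$ for $v\in\widetilde V\cap V^\perp$ on the convex domain, and then Corollaries \ref{cor:main} and \ref{cor:PCG} with the standard a priori estimates; the only difference is that you write out the Aubin--Nitsche argument for the duality step, which the paper cites as well known.
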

\begin{proof}
Given $v\in \widetilde{V}\cap V^\perp$, let $v_j=v(a_j)\phi_j$ and $v=\sum_{j=1}^{\widetilde{N}}v_j$.  It follows from an inverse estimate and element-wise homogeneity argument that 
\begin{equation}\label{eq:vj_splitting}
\sum_{j=1}^{\widetilde{N}}\|v_j\|_a^2\lesssim\sum_{j=1}^{\widetilde{N}}h^{-2}\|v_j\|_{L^2({\rm supp}(\phi_j))}^2\eqsim h^{-2}\Big\|\sum_{j=1}^{\widetilde{N}}v_j\Big\|_{L^2(\Omega)}^2.
\end{equation}
Let $P_h$ be the $a$-orthogonal projection onto the FE space $V$. On convex domains, the well-known  duality argument implies  
\begin{equation}\label{eq:duality}
\|v-P_hv\|_{L^2(\Omega)}\lesssim h|v-P_hv|_{H^1(\Omega)}.   
\end{equation}
Using \eqref{eq:vj_splitting} and \eqref{eq:duality},  we verify Assumption \ref{assump:stable} as follows:
\begin{align*}
\sum_{j=1}^{\widetilde{N}}\|v_j\|_a^2&\lesssim h^{-2}\|v\|_{L^2(\Omega)}^2=h^{-2}\|v-P_hv\|_{L^2(\Omega)}^2\\
&\lesssim|v-P_hv|_{H^1(\Omega)}^2=|v|_{H^1(\Omega)}^2.
\end{align*}
We then finish the proof by combining Corollaries \ref{cor:main} and \ref{cor:PCG} with a priori error estimates $|u-\tilde{u}_h|_{H^1(\Omega)}\lesssim h^{k+1}|u|_{H^{k+2}(\Omega)}$ and $|u-u_h|_{H^1(\Omega)}\lesssim h^k|u|_{H^{k+1}(\Omega)}$.
\end{proof}

In our analysis, superconvergence effect is due to smoothing error estimate $\|\tilde{u}_h-R_mu_h\|_a\leq\varepsilon_m\|\tilde{u}_h-u_h\|_a$, which is numerically illustrated by a $\mathcal{P}_1$-$\mathcal{P}_2$ FE pair (e.g., $\mathcal{P}_1$ for $V$ and $\mathcal{P}_2$ for $\widetilde{V}$) for Poisson's equation with exact solution $u(x,y) = \sin(\pi x)\sin(\pi y)$ on a unit square. The domain is partitioned into a three-line uniform mesh $\mathcal{T}_h$. As shown in Figure \ref{fig:P1P2_GS_CG}, the smoothing error decays rather quickly during the first four steps, but slowly thereafter. This phenomenon matches the piecewise definition of $\varepsilon_m$ in Theorem \ref{thm:main}.

\begin{figure}[ht]
    \centering
    \includegraphics[width=0.6\linewidth]{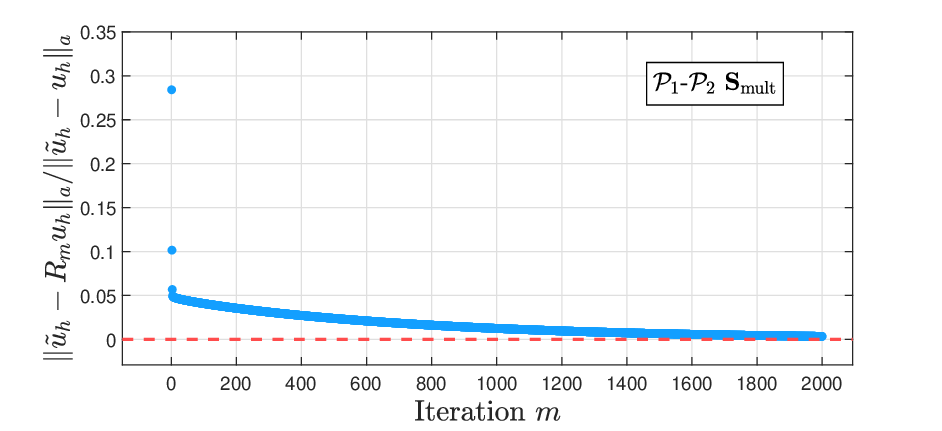}
    \includegraphics[width=0.6\linewidth]{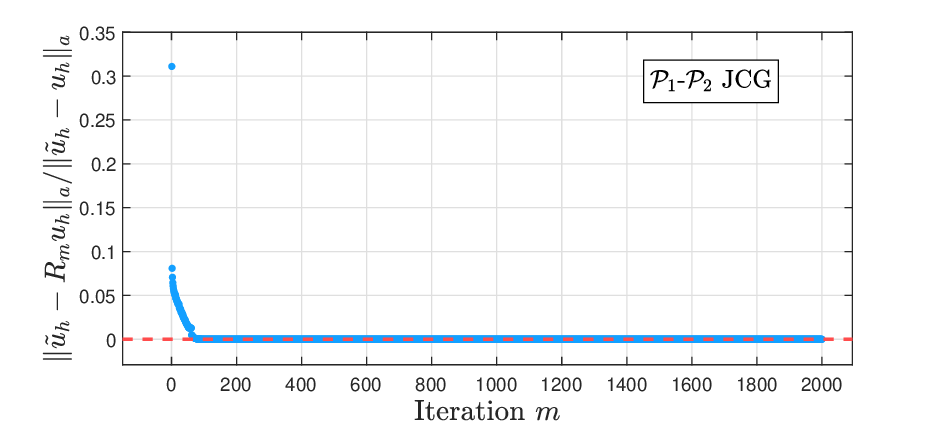}
    \caption{Decay of the smoothing rate of $\mathcal{P}_1$-$\mathcal{P}_2$ FEs for the Poisson equation on $\Omega=(0,1)^2$ with mesh-size $h=2^{-5}$. Top: $\mathbf{S}_{\rm mult}$ smoother for $\mathcal{P}_2$-FE. Bottom: CG smoother with Jacobi preconditioner (JCG) for $\mathcal{P}_2$-FE.}
    \label{fig:P1P2_GS_CG}
\end{figure}

\subsubsection{Discontinuous Galerkin Method}\label{sect:discreteV} The smoothing superconvergence also works for the discontinuous Galerkin (DG) method. Let $V=\{v_h\in L^2(\Omega): v_h|_T\in\mathcal{P}_k~\forall T\in\mathcal{T}_h\}$ and $\widetilde{V}=\{v_h\in L^2(\Omega): v_h|_T\in\mathcal{P}_{k+1}~\forall T\in\mathcal{T}_h\}$  be discontinuous FE spaces. Let $\mathcal{E}_h$ be the collection of $(d-1)$-simplices (e.g., edges for $d=2$ and faces for $d=3$) in $\mathcal{T}_h$. Given a penalty parameter $\gamma>0$, we define the bilinear form
\begin{align*}
a(u_h,v_h)
&=
\sum_{T\in\mathcal T_h}\int_T \nabla u_h\bullet\nabla v_h{\rm d}x-\sum_{E\in\mathcal E_h}\int_E\{\!\!\{\partial_nu_h\}\!\!\}\llbracket v_h\rrbracket {\rm d}s\\
&-\sum_{E\in\mathcal E_h}\int_E\{\!\!\{\partial_nv_h\}\!\!\}\llbracket u_h\rrbracket {\rm d}s
+
\sum_{E\in\mathcal E_h}\int_E \frac{\gamma}{h_E}\llbracket u_h\rrbracket\llbracket v_h\rrbracket {\rm d}s,
\end{align*}
where $h_E$ is the length of $E$, $\{\!\!\{\partial_nu_h\}\!\!\}$ and $\llbracket u_h\rrbracket$ denote the average of  normal derivative of $u_h$ and the jump of $u_h$ across $E$, respectively (cf.~\cite[Chapter 10]{BrennerScott2008}). 

For the Poisson equation \eqref{eq:Poisson}, the $\mathcal{P}_k$-DG method is to find $u_h\in V$ such that
\begin{equation}\label{eq:DG}
    a(u_h,v_h)=(f,v_h),\quad\forall v_h\in V.
\end{equation}
By $\mathbf{S}_{\rm dG,a}$ and $\bar{\mathbf{S}}_{\rm dG,m}$ we denote the Jacobi and symmetrized GS iterator for the stiffness matrix $\mathbf{A}=\mathbf{A}_{\rm dG}$ of the $\mathcal{P}_{k+1}$-DG method, respectively. The error of DG methods is measured by the broken energy norm 
\begin{equation*}
\|v\|_{1,h}=\Big(\sum_{T\in\mathcal{T}_h}\|\nabla v\|_{L^2(T)}^2+\sum_{E\in\mathcal{E}_h}\gamma h_E^{-1}\|\llbracket v\rrbracket\|_{L^2(E)}^2\Big)^\frac{1}{2}.  
\end{equation*}
A superconvergence estimate for the DG method is presented in the next theorem. 
\begin{theorem}
    Let $\Omega$ be a convex domain and $u_h\in V$ be the $\mathcal{P}_k$-DG solution in \eqref{eq:DG}. Assume that $\gamma$ is sufficiently large such that $a(v,v)\gtrsim\|v\|_{1,h}^2$ for all $v\in\widetilde{V}$. Let $R_mu_h$ be the output of either: (1) Algorithm \ref{alg:smoothing} with $S\sim\omega\mathbf{S}_{\rm dG,a}$ or $\bar{\mathbf{S}}_{\rm dG,m}$; (2) Algorithm \ref{alg:PCG} with $S\sim\mathbf{S}_{\rm dG,a}$ or $\bar{\mathbf{S}}_{\rm dG,m}$. Then we have
    \begin{equation*}
        \|u-R_mu_h\|_{1,h}\lesssim h^{k+1}|u|_{H^{k+2}(\Omega)}+\varepsilon_mh^k|u|_{H^{k+1}(\Omega)}.
    \end{equation*}
\end{theorem}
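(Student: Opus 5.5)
The argument parallels the proof of Theorem~\ref{thm:Poisson_C0FEM}: verify Assumption~\ref{assump:stable} for the pointwise decomposition, then invoke Corollaries~\ref{cor:main} and~\ref{cor:PCG}. For the DG space $\widetilde{V}$, take a local basis $\{\phi_j\}$ on each element, such as Lagrange nodal functions of $\mathcal{P}_{k+1}$ on $T$ extended by zero. Set $\widetilde{V}_j={\rm span}\{\phi_j\}$, so that $S_{\rm add}$ and $\bar S_{\rm mult}$ are represented by $\mathbf{S}_{\rm dG,a}$ and $\bar{\mathbf{S}}_{\rm dG,m}$. The energy norm $\|\bullet\|_a$ is equivalent to $\|\bullet\|_{1,h}$ on $\widetilde{V}$. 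The lower bound $a(v,v)\gtrsim\|v\|_{1,h}^2$ is assumed. The upper bound follows from Cauchy--Schwarz and the discrete trace inequality $h_E\|\{\!\!\{\partial_n v\}\!\!\}\|_{L^2(E)}^2\lesssim \sum_{T\supset E}\|\nabla v\|_{L^2(T)}^2$. Assumption~\ref{assump:regular} holds by locality and shape regularity, as the paper notes.

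\textbf{Stable splitting.} Let $v\in\widetilde{V}\cap V^\perp$ and write $v=\sum_j v_j$ with $v_j=c_j\phi_j$. Each $v_j$ is supported in a single element $T$. The inverse and trace inequalities give $\|v_j\|_{1,h}^2\lesssim h^{-2}\|v_j\|_{L^2(T)}^2$, since $h_E^{-1}\|v_j\|_{L^2(E)}^2\lesssim h^{-2}\|v_j\|_{L^2(T)}^2$. Element-wise homogeneity and equivalence of norms on $\mathcal{P}_{k+1}(T)$ then yield
\[
\sum_j\|v_j\|_a^2\lesssim h^{-2}\|v\|_{L^2(\Omega)}^2 .
\]

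\textbf{Duality estimate (main step).} It remains to show
\[
\|v\|_{L^2(\Omega)}\lesssim h\|v\|_{1,h}
\]
for $v=\tilde{u}_h-u_h$, or more generally for any $v\in\widetilde{V}$ that is $a$-orthogonal to $V$. This is where the symmetric (SIPG) structure is needed. Let $z\in H^2(\Omega)\cap H_0^1(\Omega)$ solve $-\Delta z=v$; convexity gives $\|z\|_{H^2(\Omega)}\lesssim\|v\|_{L^2(\Omega)}$. Adjoint consistency of the symmetric form gives $a(z,w)=(v,w)$ for all $w\in\widetilde{V}$, where $a$ is extended to $H^2+\widetilde{V}$ as usual. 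Hence
\[
\|v\|_{L^2(\Omega)}^2=a(z,v)=a(z-z_I,v),
\]
where $z_I\in V$ is the interpolant and $a(z_I,v)=0$ by $a$-orthogonality. Boundedness of $a$ in the augmented norm $\|w\|_{1,h}^2+\sum_E h_E\|\{\!\!\{\partial_n w\}\!\!\}\|_{L^2(E)}^2$, together with interpolation estimates, gives
\[
a(z-z_I,v)\lesssim h|z|_{H^2(\Omega)}\|v\|_{1,h}\lesssim h\|v\|_{L^2(\Omega)}\|v\|_{1,h}.
\]
The delicate point is bounding the term $\{\!\!\{\partial_n(z-z_I)\}\!\!\}$ on faces. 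This uses the continuous trace inequality on each element and requires $k\ge1$ so that $z_I$ approximates $z$ to first order in $H^2$. Combining the two estimates gives $\sum_j\|v_j\|_a^2\lesssim\|v\|_a^2$, which is Assumption~\ref{assump:stable}.

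\textbf{Conclusion.} Corollaries~\ref{cor:main} and~\ref{cor:PCG} now give
\[
\|u-R_mu_h\|_a\lesssim\|u-\tilde u_h\|_a+\varepsilon_m\|u-u_h\|_a .
\]
Here $u$ lies in the augmented space, and the corollaries' triangle-inequality argument is applied in the corresponding extended norm. Standard SIPG a priori estimates give $\|u-\tilde u_h\|_{1,h}\lesssim h^{k+1}|u|_{H^{k+2}(\Omega)}$ and $\|u-u_h\|_{1,h}\lesssim h^k|u|_{H^{k+1}(\Omega)}$, which yields the stated bound.
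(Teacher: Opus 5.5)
Your proposal is correct and follows the paper's route. The steps match: equivalence of $\|\bullet\|_a$ and $\|\bullet\|_{1,h}$ on $\widetilde{V}$, the pointwise splitting bounded by $h^{-2}\|v\|_{L^2(\Omega)}^2$, the DG duality estimate $\|v\|_{L^2(\Omega)}\lesssim h\|v\|_a$ for $v\in\widetilde{V}\cap V^\perp$, and then Corollaries~\ref{cor:main} and~\ref{cor:PCG} combined with standard SIPG a priori bounds. The only difference is that you supply the adjoint-consistency argument, and you route the final triangle inequality through a genuine norm on $H^2(\Omega)+\widetilde{V}$; the paper simply asserts both.
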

\begin{proof}
Note that $\|\bullet\|_a$ and $\|\bullet\|_{1,h}$ are equivalent on the DG space $\widetilde{V}$. In the DG case, the duality estimate $\|v-P_hv\|_{L^2(\Omega)}\lesssim h\|v-P_hv\|_a$ for $v\in \widetilde{V}\cap V^\perp$ is still true. The rest of the proof is the same as Theorem~\ref{thm:Poisson_C0FEM}.
\end{proof}

\subsection{Maxwell Equation}
The smoothing superconvergence analysis could be generalized to vector-valued FEs. The model problem is 
\begin{equation}\label{eq:Maxwell}
        \nabla\times\nabla\times u+u=f\text{ in }\Omega,\qquad \bm{n}\times (\nabla\times u)=0\text{ on }\partial\Omega.
\end{equation}
Here $\nabla\times$ is the curl operator and $\bm{n}$ is the outward unit normal to $\partial\Omega$.

Consider the $k$-th degree N\'ed\'elec edge FE of the first kind $\mathcal{N}d_k=\bm{x}\times[\mathcal{P}_{k-1}]^3+[\mathcal{P}_{k-1}]^3$.
Let $V$ be the N\'ed\'elec edge FE space of degree $\leq k$:
\[
V=\big\{v_h\in H({\rm curl},\Omega): v_h|_T\in\mathcal{N}d_k~\forall T\in\mathcal{T}_h\big\}.
\]
The FE solution  $u_h\in V\subset H({\rm curl},\Omega)$ is determined by
\begin{equation}\label{eq:Maxwell_FEM}
a(u_h,v_h)=(\nabla\times u_h,\nabla\times v_h)+(u_h,v_h)=(f,v_h),\quad \forall v_h\in V.
\end{equation}

Consider the one-order-higher FE space 
\[
\widetilde{V}=\big\{v_h\in H({\rm curl},\Omega): v_h|_T\in\mathcal{N}d_{k+1}~\forall T\in\mathcal{T}_h\big\}.
\]
The operator $A$ is represented by the $\mathcal{N}d_{k+1}$ FE stiffness matrix $\mathbf{A}=\mathbf{A}_{\rm Nd}$ of \eqref{eq:Maxwell_FEM}. It is shown in \cite{Zikatanov2008} that any splitting of $\widetilde{V}$ into the sum of 1D subspaces does not satisfy Assumption \ref{assump:stable}. A common remedy is the following subspace decomposition based on vertex-oriented patches
\begin{equation}\label{eq:block}
\widetilde{V}=\sum_{j=1}^{N_v}\widetilde{V}_j,\qquad \widetilde{V}_j=\big\{v_h\in \widetilde{V}: {\rm supp}(v_h)\subseteq\Omega_j\big\},
\end{equation}
where $N_v$ denotes the number of vertices in $\mathcal{T}_h$, and $\Omega_j$ is the union of elements sharing the $j$-th vertex in $\mathcal{T}_h$.

Given $r\in \widetilde{V}^\prime$, the action of the additive smoother $S_{\rm add}=S_{\rm blk,a}$ is given by $S_{\rm blk,a}r=\sum_{j=1}^{N_v}e_j$,
where each $e_j\in \widetilde{V}_j$ solves 
\[
(\nabla\times e_j,\nabla\times v_j)_{\Omega_j}+(e_j,v_j)_{\Omega_j}=\langle r,v_j\rangle,\quad \forall v_j\in \widetilde{V}_j.
\]
The action of $S_{\rm mult}=S_{\rm blk,m}$ is more complicated and is given by Algorithm \ref{alg:Sm} based on the decomposition \eqref{eq:block}. The symmetrized version $\bar{S}_{\rm blk,m}$ is given by \eqref{eq:Sbar_mult}. 

In the literature, $S_{\rm blk,a}$ is referred to as an additive block smoother. Similarly, $S_{\rm blk,m}$ is referred to as a multiplicative block smoother. By convention, $S_{\rm blk, a}$ is a block Jacobi iterator and $S_{\rm blk, m}$ is a block GS iterator for the $\mathcal{N}d_{k+1}$ FE stiffness matrix $\mathbf{A}_{\rm Nd}$ for \eqref{eq:Maxwell_FEM}. 

The error estimates of N\'ed\'elec FE solutions are based on the norm $\|\bullet\|_{H(\rm curl,\Omega)}=(\|\bullet\|_{L^2(\Omega)}^2+\|\nabla\times\bullet\|_{L^2(\Omega)}^2)^{1/2}$ and semi-norm $|\bullet|_{H^k(\rm curl,\Omega)}=(|\bullet|_{H^k(\Omega)}^2+|\nabla\times\bullet|_{H^k(\Omega)}^2)^{1/2}$. A superconvergence result for the N\'ed\'elec FE is presented in the next theorem. 
\begin{theorem}\label{thm:Maxwell}
    Let $\Omega$ be a convex domain and $u_h\in V$ be the $\mathcal{N}d_k$ FE solution in \eqref{eq:Maxwell_FEM}. Let $R_mu_h$ be the output of either: (1) Algorithm \ref{alg:smoothing} with $S=\omega S_{\rm blk,a}$ or $\bar{S}_{\rm blk,m}$; (2) Algorithm \ref{alg:PCG} with $S=S_{\rm blk, a}$ or $\bar{S}_{\rm blk, m}$. Then we have
    \begin{equation*}
        \|u-R_mu_h\|_{H({\rm curl},\Omega)}\lesssim h^{k+1}|u|_{H^{k+1}({\rm curl},\Omega)}+\varepsilon_mh^k|u|_{H^k({\rm curl},\Omega)}.
    \end{equation*}
\end{theorem}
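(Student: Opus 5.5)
The plan is to invoke Corollaries \ref{cor:main} and \ref{cor:PCG}, so the work reduces to verifying Assumption \ref{assump:stable} for the vertex-patch decomposition \eqref{eq:block}. Assumption \ref{assump:regular} and the overlap bound $M=\mathcal{O}(1)$ follow from shape regularity, since each element meets only boundedly many patches $\Omega_j$. Once Assumption \ref{assump:stable} is verified with $C_{\rm st}=\mathcal{O}(1)$, the result follows from these corollaries together with the standard a priori estimates
\[
\|u-\tilde u_h\|_{H({\rm curl},\Omega)}\lesssim h^{k+1}|u|_{H^{k+1}({\rm curl},\Omega)},\qquad \|u-u_h\|_{H({\rm curl},\Omega)}\lesssim h^{k}|u|_{H^{k}({\rm curl},\Omega)}
\]
for $\mathcal{N}d_{k+1}$ and $\mathcal{N}d_k$, respectively. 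Note that $\|\bullet\|_a=\|\bullet\|_{H({\rm curl},\Omega)}$.

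To verify Assumption \ref{assump:stable}, take $v\in\widetilde V\cap V^\perp$ and use a discrete Helmholtz/regular decomposition of $\widetilde V$ in the spirit of Hiptmair and Arnold--Falk--Winther:
\[
v=\Pi_h z+\nabla\phi+\tilde r,
\]
with the following pieces.
\begin{itemize}
\item $z\in [H^1]^3$ (or its discrete Lagrange counterpart) satisfies $|z|_{H^1}\lesssim\|\nabla\times v\|$.
\item $\phi$ lies in the continuous $\mathcal P_{k+1}$ Lagrange space.
\item $\tilde r$ is a high-frequency remainder with $h^{-1}\|\tilde r\|\lesssim\|v\|_{H({\rm curl})}$.
\end{itemize}
Each term is then localized by a partition of unity $\{\lambda_j\}$ of hat functions followed by a local interpolation/projection into $\widetilde V_j$. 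Equivalently, one can work directly with the commuting projection structure: write $v_j=\Pi_h^{\widetilde V}(\lambda_j v)$ for a local bounded cochain projection. Inverse estimates give
\[
\sum_j\|v_j\|_a^2\lesssim h^{-2}\|v-\nabla\phi\|^2+\|\nabla\times v\|^2+\sum_j\|\nabla(\text{local piece of }\phi)\|^2.
\]
The gradient part is handled exactly as in the Poisson case, via nodal splitting of $\phi$ into vertex patches, since gradients of Lagrange functions supported in $\Omega_j$ lie in $\widetilde V_j$. The remaining non-gradient part must then be controlled in $L^2$ by $h\|v\|_a$.

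The key step, and the main obstacle, is this $L^2$ smallness of the non-gradient part of $v$, which replaces the duality estimate \eqref{eq:duality}. Since $v$ is $a$-orthogonal to $V$, I would use the discrete Helmholtz decomposition $v=\nabla\phi+w$ with $w$ discretely divergence-free relative to the appropriate space. On the convex domain I would then apply the Maxwell duality argument: the discrete compactness property, together with $\|w-\Pi_h^{V}\mathcal{H}w\|\lesssim h\|\nabla\times w\|$, where $\mathcal{H}$ is the Hodge map into $H^1$ given by the $H^1$-regularity on convex domains. This yields $\|w\|\lesssim h\|v\|_{H({\rm curl})}$ up to the orthogonality to $V$, which is used to subtract $P_hv=0$, analogous to the proof of Theorem \ref{thm:Poisson_C0FEM}. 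For the gradient part, $\nabla\phi\in\widetilde V$ with $\phi$ in $\mathcal P_{k+2}$ Lagrange space. Orthogonality to $\nabla$ of $\mathcal{P}_{k+1}$ Lagrange functions in $V$ implies $(v,\nabla\psi)=0$, which gives an $L^2$ duality bound for $\phi$ itself, $\|\phi\|\lesssim h|\phi|_{H^1}$, exactly as in \eqref{eq:duality}. Nodal splitting of $\phi$ then works.

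Combining the bounds for the two parts gives $C_{\rm st}=\mathcal{O}(1)$, and Corollaries \ref{cor:main} and \ref{cor:PCG} conclude the proof.
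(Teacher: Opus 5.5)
Your skeleton matches the paper's proof: verify Assumption~\ref{assump:stable} for the vertex-patch splitting \eqref{eq:block}, then invoke Corollaries~\ref{cor:main} and~\ref{cor:PCG} together with the a priori bounds. The paper does not prove Assumption~\ref{assump:stable}; it simply cites Theorem 4.2 of Arnold--Falk--Winther from their multigrid analysis. You instead verify it directly for the $p$-enrichment $\mathcal{N}d_k\subset\mathcal{N}d_{k+1}$ on one mesh, using:
\begin{itemize}
\item a splitting $v=\nabla\phi+w$;
\item an $L^2$ bound for $\phi$ and the bound $\|w\|\lesssim h\|v\|_a$;
\item nodal splitting of $\phi$, and $w_j=\Pi^{\widetilde V}(\lambda_jw)$ for $w$.
\end{itemize}
This is a legitimate, self-contained route suited to the present two-level setting. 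The reduction $\sum_j\|v_j\|_a^2\lesssim h^{-2}\|\phi\|^2+h^{-2}\|w\|^2+\|\nabla\times w\|^2$ is right. Two side remarks on it:
\begin{itemize}
\item The localization must act on $w$, not on $v$, so your ``equivalently'' $v_j=\Pi_h^{\widetilde V}(\lambda_jv)$ is wrong.
\item The HX-type splitting $\Pi_hz+\nabla\phi+\tilde r$ plays no role, since $\Pi_hz$ is not $O(h)$ in $L^2$.
\end{itemize}
Your polynomial degrees are off by one. With $\mathcal{N}d_k=\bm{x}\times[\mathcal{P}_{k-1}]^3+[\mathcal{P}_{k-1}]^3$ one has $\nabla\mathcal{P}_{k+1}\subset\widetilde V$ but $\nabla\mathcal{P}_{k+2}\not\subset\widetilde V$. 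Likewise $V$ contains $\nabla\mathcal{P}_k$, not $\nabla\mathcal{P}_{k+1}$; these are Lagrange spaces without boundary conditions, since the boundary condition in \eqref{eq:Maxwell} is natural. Take $\phi\in\mathcal{P}_{k+1}$ to be the discrete Helmholtz potential. Then $(\nabla\phi,\nabla\psi)=(v,\nabla\psi)=a(v,\nabla\psi)=0$ for $\psi\in\mathcal{P}_k$, and the Neumann version of \eqref{eq:duality} gives $\|\phi-\bar\phi\|\lesssim h\|\nabla\phi\|$.

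The genuine gap is the step you call the main obstacle. The bound $\|w\|\lesssim h\|v\|_a$ does not follow from $\|w-\Pi_h^V\mathcal Hw\|\lesssim h\|\nabla\times w\|$ plus ``subtracting $P_hv=0$''. The orthogonality holds in $a(\bullet,\bullet)$, not in $L^2$: for $\chi_h\in V$ it only gives $(v,\chi_h)=-(\nabla\times v,\nabla\times\chi_h)$, which is not small. Discrete compactness is qualitative and does not help. You need an Aubin--Nitsche argument for the Maxwell operator itself.

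Let $\zeta\in H({\rm curl},\Omega)$ solve $a(\zeta,\chi)=(w,\chi)$ for all $\chi$. Split $\zeta=\nabla\theta+\zeta_0$ with $\zeta_0\perp\nabla H^1(\Omega)$. Then $\nabla\theta$ is the $L^2$-projection of $w$ onto $\nabla H^1(\Omega)$, so $w-\nabla\theta=\mathcal Hw$. Because $(w,\nabla\phi)=0$ and $\Pi^V\zeta_0\in V$,
\[
\|w\|^2=a(\zeta,w)=a(\zeta,v)=(\nabla\theta,v)+a(\zeta_0-\Pi^V\zeta_0,v),\qquad (\nabla\theta,v)=\|\nabla\theta\|^2=\|w-\mathcal Hw\|^2\lesssim h^2\|\nabla\times v\|^2 .
\]
On a convex domain $\|\zeta_0\|_{H^1}+\|\nabla\times\zeta_0\|_{H^1}\lesssim\|w\|$, hence $\|\zeta_0-\Pi^V\zeta_0\|_a\lesssim h\|w\|$. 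Therefore $\|w\|^2\lesssim h^2\|v\|_a^2+h\|w\|\,\|v\|_a$, i.e.\ $\|w\|\lesssim h\|v\|_a$. With this inserted, your argument yields $C_{\rm st}=\mathcal{O}(1)$ and the theorem.
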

\begin{proof}
The vertex-oriented subspace decomposition \eqref{eq:block} satisfies Assumption \ref{assump:stable}, see \cite[Theorem 4.2]{ArnoldFalkWinther2000}. We  finish the proof by combining Corollaries \ref{cor:main} and \ref{cor:PCG} with a priori error estimates $\|u-\tilde{u}_h\|_{H({\rm curl},\Omega)}\lesssim h^{k+1}(|u|_{H^{k+2}(\Omega)}+|\nabla\times u|_{H^{k+2}(\Omega)})$ and $\|u-u_h\|_{H({\rm curl},\Omega)}\lesssim h^k(|u|_{H^{k+1}(\Omega)}+|\nabla\times u|_{H^{k+1}(\Omega)})$.
\end{proof}

\begin{remark}
Similar superconvergence result still holds true for N\'ed\'elec FEs of the second kind. The block smoother in Theorem \ref{thm:Maxwell} can be replaced with any smoother that is able to ensure uniform contraction of a multigrid for the discrete H(curl) problem \eqref{eq:Maxwell_FEM}, e.g., the more efficient smoother due to Hiptmair \cite{Hiptmair1999SINUM}.
\end{remark}
   
To save the computational cost per smoothing step, we extract fine grid components from the Hiptmair-Xu (HX) preconditioner in \cite{HiptmairXu2007}. Let $\mathbf{D}_{\rm Nd}={\rm diag}(\mathbf{A}_{\rm Nd})$ and $\mathbf{D}_{k+1}={\rm diag}(\mathbf{A}_{k+1})$ with $\mathbf{A}_{k+1}$ being the $\mathcal{P}_{k+1}$-FE stiffness matrix for $(\nabla\bullet,\nabla\bullet)+(\bullet,\bullet)$. The HX smoother is
\begin{equation}\label{eq:HX_diag}
\mathbf{S}_\text{HX} = \mathbf{D}_{\rm Nd}^{-1} + \mathbf{G} \mathbf{D}_{k+1}^{-1}\mathbf{G}^\top + \mathbf{P}\vec{\mathbf{D}}_{k+1}^{-1} \mathbf{P}^\top,
\end{equation}
where $\mathbf{G}$ represents the discrete gradient from $\mathcal{P}_{k+1}$-FE to $\mathcal{N}d_{k+1}$-FE space, $\vec{\mathbf{D}}_{k+1}={\rm diag}(\mathbf{D}_{k+1},\mathbf{D}_{k+1},\mathbf{D}_{k+1})$, and $\mathbf{P}$ represents the N\'ed\'elec interpolation from $\mathcal{P}_{k+1}$-FE onto $\mathcal{N}d_{k+1}$-FE space. This construction yields an \emph{explicit} smoother, see Section~\ref{subsect:Numerical_Maxwell} for numerical results.

\subsection{Biharmonic Equation} On a Lipschitz domain  $\Omega\subset\mathbb{R}^d$, we consider the biharmonic boundary value problem 
\begin{equation}\label{eq:biharmonic}
        \Delta^2 u=f\text{ in }\Omega,\qquad u=\partial_nu=0\text{ on }\partial\Omega.
\end{equation}
Let $V$ and $\widetilde{V}$  be the same FE spaces as in section \ref{subsect:Poisson} and define 
\begin{align*}
a(u_h,v_h)
&=
\sum_{T\in\mathcal T_h}\int_T \nabla^2 u_h:\nabla^2 v_h{\rm d}x-\sum_{E\in\mathcal E_h}\int_E\{\!\!\{\partial_n^2u_h\}\!\!\}\llbracket \partial_nv_h\rrbracket {\rm d}s\\
&-\sum_{E\in\mathcal E_h}\int_E\{\!\!\{\partial_n^2v_h\}\!\!\}\llbracket \partial_nu_h\rrbracket {\rm d}s
+
\sum_{E\in\mathcal E_h}\int_E \frac{\gamma}{h_E}\llbracket \partial_nu_h\rrbracket\llbracket \partial_nv_h\rrbracket {\rm d}s.
\end{align*}

For the biharmonic equation \eqref{eq:biharmonic}, the continuous interior penalty method (denoted by CIP-$\mathcal{P}_k$, cf.~\cite{BrennerSung2005}) seeks $u_h\in V$ such that
\begin{equation}\label{eq:CIP}
    a(u_h,v_h)=(f,v_h),\quad\forall v_h\in V.
\end{equation}
For this problem, $A$ is represented by the CIP-$\mathcal{P}_{k+1}$ stiffness matrix $\mathbf{A}=\mathbf{A}_{\rm IP}=\mathbf{D}_{\rm IP}-\mathbf{L}_{\rm IP}-\mathbf{L}_{\rm IP}^\top$. By $\mathbf{S}_{\rm IP,a}=\mathbf{D}_{\rm IP}^{-1}$ and $\bar{\mathbf{S}}_{\rm IP,m}=(\mathbf{D}_{\rm IP}-\mathbf{L}_{\rm IP}^\top)^{-1}\mathbf{D}_{\rm IP}(\mathbf{D}_{\rm IP}-\mathbf{L}_{\rm IP})^{-1}$ we denote the Jacobi and symmetrized GS iterators for $\mathbf{A}_{\rm IP}$, respectively. Define the CIP energy norm 
\begin{equation*}
\|v\|_{2,h}=\Big(\sum_{T\in\mathcal{T}_h}\|\nabla^2 v\|_{L^2(T)}^2+\sum_{E\in\mathcal{E}_h}\gamma h_E^{-1}\|\llbracket \partial_nv\rrbracket\|_{L^2(E)}^2\Big)^\frac{1}{2}.  
\end{equation*}

Our superconvergence analysis requires full elliptic regularity \begin{equation}\label{eq:elliptic_regularity}
\|\Delta^{-2}g\|_{H^4(\Omega)}\lesssim\|g\|_{L^2(\Omega)}\quad \text{for any }g\in L^2(\Omega),
\end{equation}
where $\Delta^{-2}g$ solves \eqref{eq:biharmonic} with $f$ replaced with $g$.
Assume $\gamma$ is sufficiently large such that $a(v,v)\gtrsim\|v\|_{2,h}^2$ for all $v\in\widetilde{V}$. Let $R_mu_h$ be the output of either: (1) Algorithm \ref{alg:smoothing} with $S\sim \omega\mathbf{S}_{\rm IP,a}$ or $\bar{\mathbf{S}}_{\rm IP,m}$; (2) Algorithm \ref{alg:PCG} with $S\sim\mathbf{S}_{\rm IP,a}$ or $\bar{\mathbf{S}}_{\rm IP,m}$. Following the same proof as in Theorem \ref{thm:Poisson_C0FEM}, we have
\begin{equation*}
\|u-R_mu_h\|_{2,h}\lesssim h^k|u|_{H^{k+2}(\Omega)}+\varepsilon_m h^{k-1}|u|_{H^{k+1}(\Omega)}.
\end{equation*}

\begin{remark}
In general, \eqref{eq:elliptic_regularity} fails even on convex polygonal domains. On the other hand, \eqref{eq:elliptic_regularity} holds true on domain with smooth boundary $\partial\Omega$, while additional geometric variational crimes need to be analyzed in this case. Despite theoretical incompleteness, Section~\ref{subsect:Numerical_biharmonic} will demonstrate superconvergence of $\|u-R_mu_h\|_{2,h}$.
\end{remark}

\section{Numerical Experiments}\label{sect:numerical}
In this section, we present numerical examples to demonstrate the efficiency of the proposed smoothing-based superconvergence. We apply Algorithms \ref{alg:smoothing} and \ref{alg:PCG} to the Poisson, Maxwell, biharmonic and Helmholtz equations. By ``$\mathcal{P}_k$-$\mathcal{P}_{k+1}$'' FE pair, we mean that a $\mathcal{P}_k$-FE solution $u_h$ is postprocessed via $m$-step smoothing to obtain a $\mathcal{P}_{k+1}$-FE function $R_m u_h$. The mesh sequence is generated by uniform refinement of an initial mesh except in sections \ref{subsect:gmsh} and \ref{subsect:adapt}.

\subsection{Poisson Equation} \label{subsect:Numerical_Poisson}
Consider the Poisson equation \eqref{eq:Poisson} on the regular hexagon
$\Omega$ with vertices $(\sin\theta_i,\cos\theta_i)$, $\theta_i=i\pi/3$,
$i=0,1,\dots,5$. To satisfy the homogeneous Dirichlet boundary condition,
we choose the exact solution as
\begin{equation*}
u(x,y)=(3-4x^2)(3-(x+\sqrt{3}y)^2)(3-(x-\sqrt{3}y)^2).
\end{equation*}
We test superconvergence of $R_mu_h$ based on $\mathcal{P}_1$-$\mathcal{P}_2$ and $\mathcal{P}_2$-$\mathcal{P}_3$ smoothing and three different smoothers: damped Jacobi, GS, and conjugate gradient (CG).

For $\mathcal{P}_1$-$\mathcal{P}_2$ FE pairs, Table \ref{tab:P1P2} compares superconvergence of $|u-R_mu_h|_{H^1(\Omega)}$ using $m=1, 2, 3$ iterations of damped Jacobi (with $\omega=2/3$), GS, and CG. For this problem, CG smoothing yields the best overall performance. We then visualize the behavior of CG for both $\mathcal{P}_1$-$\mathcal{P}_2$ and $\mathcal{P}_2$-$\mathcal{P}_3$ pairs. As shown in Figure \ref{fig:Poisson_Performance_All}, CG smoothing significantly improves the order of convergence of the FE solution $u_h$.

\begin{table}[htbp]
\centering
\small 
\setlength{\tabcolsep}{4.5pt} 
\caption{Superconvergence of $|u-R_mu_h|_{H^1(\Omega)}$ from $\mathcal{P}_1$-$\mathcal{P}_2$ pair for the Poisson equation on the hexagonal domain based on $m$-step damped Jacobi, GS and CG smoothing, $m=1, 2, 3$.}
\resizebox{\linewidth}{!}{%
\begin{tabular}{llccccccc}
\toprule
\multirow{2}{*}{Method} & \multirow{2}{*}{$m$} & \multicolumn{6}{c}{$h$} & \multirow{2}{*}{Order} \\
\cmidrule(lr){3-8}
 & & $1/2^2$ & $1/2^3$ & $1/2^4$ & $1/2^5$ & $1/2^6$ & $1/2^7$ & \\
\midrule
$\mathcal{P}_1$ FE solution & 0 & 1.037e+1 & 5.259e+0 & 2.639e+0 & 1.321e+0 & 6.604e-1 & 3.302e-1 & 0.999 \\
\midrule
\multirow{3}{*}{Damped Jacobi} 
 & 1 & 2.715e+0 & 1.067e+0 & 4.602e-1 & 2.109e-1 & 1.005e-1 & 4.896e-2 & 1.077 \\
 & 2 & 2.037e+0 & 6.874e-1 & 2.466e-1 & 9.294e-2 & 3.688e-2 & 1.552e-2 & 1.330 \\
 & 3 & 1.875e+0 & 6.079e-1 & 2.081e-1 & 7.331e-2 & 2.619e-2 & 9.492e-3 & 1.485 \\
\midrule
\multirow{3}{*}{Gauss-Seidel} 
 & 1 & 2.586e+0 & 1.091e+0 & 4.922e-1 & 2.306e-1 & 1.111e-1 & 5.445e-2 & 1.058 \\
 & 2 & 2.084e+0 & 6.663e-1 & 2.375e-1 & 9.081e-2 & 3.667e-2 & 1.567e-2 & 1.307 \\
 & 3 & 1.949e+0 & 6.003e-1 & 2.017e-1 & 7.159e-2 & 2.587e-2 & 9.469e-3 & 1.471 \\
\midrule
\multirow{3}{*}{Conjugate Gradient} 
 & 1 & 2.313e+0 & 8.055e-1 & 2.836e-1 & 1.002e-1 & 3.545e-2 & 1.254e-2 & 1.500 \\
 & 2 & 1.822e+0 & 5.886e-1 & 2.002e-1 & 6.987e-2 & 2.459e-2 & 8.682e-3 & 1.509 \\
 & 3 & 1.653e+0 & 5.116e-1 & 1.694e-1 & 5.858e-2 & 2.057e-2 & 7.259e-3 & 1.514 \\
\bottomrule
\end{tabular}%
}\label{tab:P1P2}
\end{table}

\begin{figure}[!htbp]
    \centering
    \includegraphics[width=0.45\textwidth]{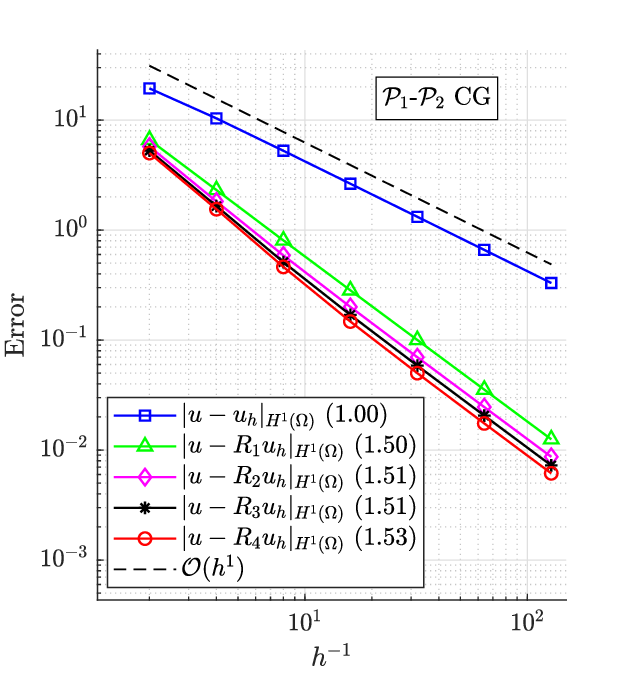}
    \includegraphics[width=0.45\textwidth]{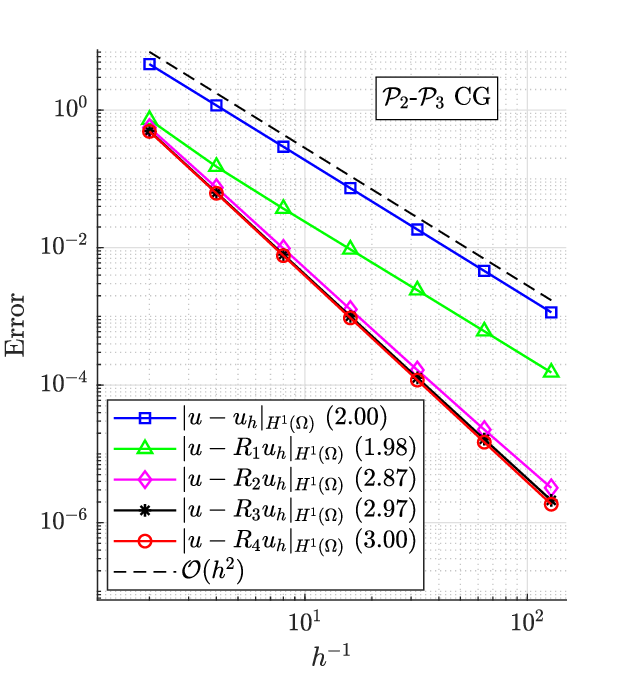}

    \caption{Superconvergence of $|u-R_mu_h|_{H^1(\Omega)}$ for the Poisson equation based on $m$-step CG smoothing, $m=1, 2, 3$. The order of convergence in $h$ is shown in parentheses.}
    \label{fig:Poisson_Performance_All}
\end{figure}


\subsection{Maxwell Equation}\label{subsect:Numerical_Maxwell} Consider the Maxwell equation \eqref{eq:Maxwell} on the unit cube $\Omega = (0,1)^3$ with exact solution
\begin{equation*}
u(x,y,z) = \pi^{-2} (\sin(\pi x)\cos(\pi y)\cos(\pi z),-\cos(\pi x)\sin(\pi y)\cos(\pi z), 0 ). 
\end{equation*}

We investigate the $\mathcal{N}d_1$-$\mathcal{N}d_2$ and $\mathcal{N}d_2$-$\mathcal{N}d_3$  FE pairs. To construct $R_m u_h$, we apply $m$ steps of four different smoothers: damped block Jacobi, block GS, and PCG preconditioned by either a block Jacobi or an HX smoother in \eqref{eq:HX_diag}. 

As observed in Tables \ref{tab:Nd1Nd2_smoother_compare} and \ref{tab:Nd2Nd3_smoother_compare}, even a simple 3-step smoothing procedure yields apparent superconvergence for Nédélec FEs. For the smoothing methods under comparison, the block GS iteration and PCG preconditioned by block Jacobi  deliver the best overall error reduction, albeit requiring a relatively higher computational effort. Notably, the explicit HX smoother yields satisfactory superconvergence without solving patch-wise local problems.

\begin{table}[htbp]
\centering
\small
\setlength{\tabcolsep}{4.5pt}
\caption{Superconvergence of $\|u-R_mu_h\|_{H({\rm curl},\Omega)}$ from the $\mathcal{N}d_1$-$\mathcal{N}d_2$ pair for the Maxwell equation on $\Omega=(0,1)^3$ based on $m$-step block GS and PCG smoothing, $m=1, 2, 3$.}
\resizebox{\linewidth}{!}{%
\begin{tabular}{llcccccc}
\toprule
\multirow{2}{*}{Method} & \multirow{2}{*}{$m$} & \multicolumn{5}{c}{$h$} & \multirow{2}{*}{Order} \\
\cmidrule(lr){3-7}
& & $1/2^1$ & $1/2^2$ & $1/2^3$ & $1/2^4$ & $1/2^5$ & \\
\midrule
$\mathcal{N}d_1$ FE solution & 0
& 1.685e-01 & 9.489e-02 & 4.939e-02 & 2.499e-02 & 1.254e-02 & 0.974 \\
\midrule
\multirow{3}{*}{\shortstack{block Gauss\\-Seidel}}
& 1 & 5.991e-02 & 1.936e-02 & 7.319e-03 & 2.409e-03 & 7.036e-04 & 1.595 \\
& 2 & 5.987e-02 & 1.753e-02 & 6.039e-03 & 2.152e-03 & 6.513e-04 & 1.574 \\
& 3 & 5.986e-02 & 1.734e-02 & 5.369e-03 & 1.986e-03 & 6.248e-04 & 1.582 \\
\midrule
\multirow{3}{*}{\shortstack{block Jacobi\\PCG}}
& 1 & 7.844e-02 & 3.460e-02 & 1.368e-02 & 5.727e-03 & 2.653e-03 & 1.237 \\
& 2 & 6.221e-02 & 2.446e-02 & 8.744e-03 & 2.640e-03 & 7.790e-04 & 1.665 \\
& 3 & 6.013e-02 & 1.968e-02 & 8.744e-03 & 2.440e-03 & 6.793e-04 & 1.641 \\
\midrule
\multirow{3}{*}{\shortstack{HX smoother\\PCG}}
& 1 & 1.029e-01 & 4.806e-02 & 2.177e-02 & 1.031e-02 & 5.056e-03 & 1.082 \\
& 2 & 8.000e-02 & 3.266e-02 & 1.208e-02 & 4.822e-03 & 2.182e-03 & 1.304 \\
& 3 & 7.102e-02 & 2.802e-02 & 9.275e-03 & 2.921e-03 & 1.028e-03 & 1.597 \\
\bottomrule
\end{tabular}%
}
\label{tab:Nd1Nd2_smoother_compare}
\end{table}

\begin{table}[htbp]
\centering
\small
\setlength{\tabcolsep}{4.5pt}
\caption{Superconvergence of $\|u-R_mu_h\|_{H({\rm curl},\Omega)}$ from $\mathcal{N}d_2$-$\mathcal{N}d_3$ pair for the Maxwell equation on $\Omega=(0,1)^3$ based on  $m$-step block GS and PCG smoothing, $m=1, 2, 3$.}
\resizebox{\linewidth}{!}{%
\begin{tabular}{llcccccc}
\toprule
\multirow{2}{*}{Method} & \multirow{2}{*}{$m$} & \multicolumn{5}{c}{$h$} & \multirow{2}{*}{Order} \\
\cmidrule(lr){3-7}
& & $1/2^0$ & $1/2^1$ & $1/2^2$ & $1/2^3$ & $1/2^4$ & \\
\midrule
$\mathcal{N}d_2$ FE solution & 0
& 1.440e-01 & 5.928e-02 & 1.727e-02 & 4.541e-03 & 1.154e-03 & 1.897 \\
\midrule
\multirow{3}{*}{\shortstack{block Gauss\\-Seidel}}
& 1 & 9.354e-02 & 1.652e-02 & 2.350e-03 & 3.057e-04 & 3.898e-05 & 2.912 \\
& 2 & 9.354e-02 & 1.652e-02 & 2.348e-03 & 3.042e-04 & 3.851e-05 & 2.918 \\
& 3 & 9.354e-02 & 1.652e-02 & 2.348e-03 & 3.040e-04 & 3.845e-05 & 2.919 \\
\midrule
\multirow{3}{*}{\shortstack{block Jacobi\\PCG}}
& 1 & 9.652e-02 & 2.198e-02 & 4.503e-03 & 9.663e-04 & 2.257e-04 & 2.204 \\
& 2 & 9.377e-02 & 1.691e-02 & 2.545e-03 & 3.707e-04 & 6.033e-05 & 2.717 \\
& 3 & 9.354e-02 & 1.654e-02 & 2.368e-03 & 3.112e-04 & 4.041e-05 & 2.896 \\
\midrule
\multirow{3}{*}{\shortstack{HX smoother\\PCG}}
& 1 & 1.098e-01 & 3.212e-02 & 7.913e-03 & 2.021e-03 & 5.122e-04 & 1.988 \\
& 2 & 1.006e-01 & 2.315e-02 & 4.603e-03 & 1.010e-03 & 2.405e-04 & 2.196 \\
& 3 & 1.006e-01 & 2.015e-02 & 3.455e-03 & 6.305e-04 & 1.337e-04 & 2.416 \\
\bottomrule
\end{tabular}%
}
\label{tab:Nd2Nd3_smoother_compare}
\end{table}


\subsection{Biharmonic Equation}\label{subsect:Numerical_biharmonic}
We evaluate our approach on the biharmonic equation \eqref{eq:biharmonic} defined on $\Omega = (0,1)^2$. The exact solution is given by
\begin{equation*}
    u(x,y) = (1 - \cos(2\pi x))(1 - \cos(2\pi y)).
\end{equation*}
The problem is discretized using the CIP method \eqref{eq:CIP}. We investigate both the $\mathcal{P}_2$-$\mathcal{P}_3$ ($\gamma=10$) and $\mathcal{P}_3$-$\mathcal{P}_4$ ($\gamma=17$) FE pairs with JCG smoother (CG with Jacobi preconditioner). It is observed in Figure \ref{fig:Biharmonic_Performance} that 3-4 steps of JCG smoothing yield apparent superconvergence for the $\mathcal{P}_2$-CIP scheme. However, JCG-based superconvergence is rather weak for the $\mathcal{P}_3$-$\mathcal{P}_4$ CIP pair, since it requires more than 10 steps of JCG to observe order of  superconvergence.

We then use PCG smoothing with block Jacobi preconditioner corresponding to the vertex-oriented patch-wise space decomposition as in \eqref{eq:block}.
This remedy leads to one order of superconvergence by 4 steps of smoothing, see Table \ref{tab:combined_pk_pk1_smoother_compare}.

\begin{figure}[!ht]
    \centering
    \includegraphics[width=0.49\textwidth]{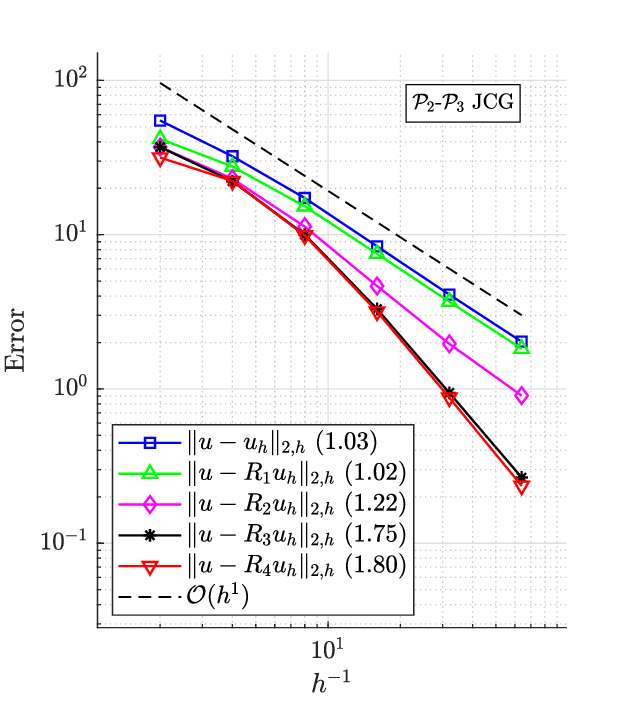}
    \includegraphics[width=0.49\textwidth]{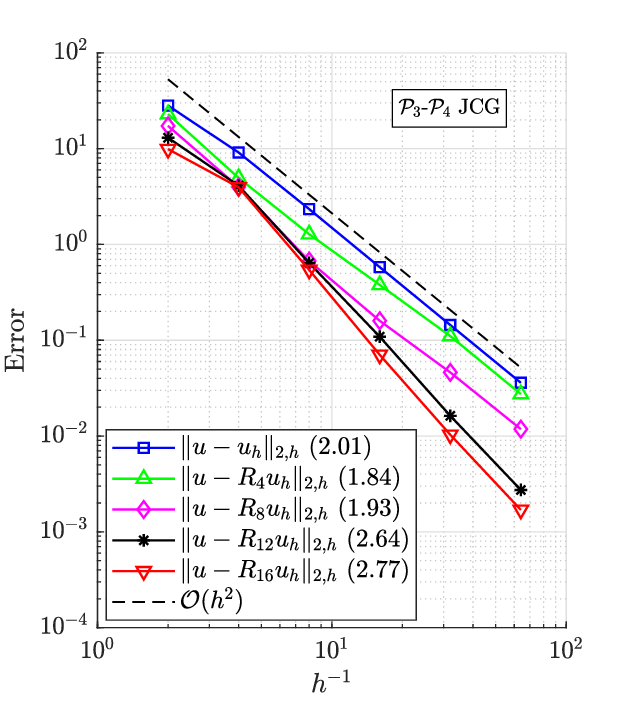}
    \caption{Superconvergence of $\|u-R_mu_h\|_{2,h}$ from CIP methods for the biharmonic equation on $\Omega=(0,1)^2$ based on $m$-step JCG smoothing. Left: $\mathcal{P}_2$-$\mathcal{P}_3$, $m=1, 2, 3, 4;\ \gamma=10$; Right: $\mathcal{P}_3$-$\mathcal{P}_4$, $m=4, 8, 12, 16;\ \gamma=17$.  The order of convergence in $h$ is shown in parentheses.}
    \label{fig:Biharmonic_Performance}
\end{figure}

\begin{table}[!htbp]
\centering
\small
\setlength{\tabcolsep}{4.5pt}
\caption{Superconvergence of $\|u-R_mu_h\|_{2,h}$ from the $\mathcal{P}_3$-$\mathcal{P}_4$ CIP (penalty parameter $\gamma=17$) pair for the biharmonic equation on  $\Omega=(0,1)^2$ based on $m$-step PCG smoothing with Jacobi or block Jacobi preconditioners, $m=1, 2, 3, 4$.}
\resizebox{\linewidth}{!}{%
\begin{tabular}{llccccccc}
\toprule
\multirow{2}{*}{Method} & \multirow{2}{*}{$m$} & \multicolumn{6}{c}{$h$} & \multirow{2}{*}{Order} \\
\cmidrule(lr){3-8}
& & $1/2^{1}$ & $1/2^{2}$ & $1/2^{3}$ & $1/2^{4}$ & $1/2^{5}$ & $1/2^{6}$ & \\
\midrule
$\mathcal{P}_{3}$ FE solution & 0 & 2.813e+1 & 9.108e+0 & 2.339e+0 & 5.793e-1 & 1.442e-1 & 3.598e-2 & 2.007 \\
\cmidrule(lr){1-9}
\multirow{4}{*}{\shortstack{$\mathcal{P}_{3}$-$\mathcal{P}_{4}$\\Jacobi PCG}} & 1 & 2.432e+1 & 7.481e+0 & 1.829e+0 & 4.415e-1 & 1.086e-1 & 2.697e-2 & 2.027 \\
 & 2 & 2.293e+1 & 6.833e+0 & 1.798e+0 & 4.867e-1 & 1.268e-1 & 3.212e-2 & 1.936 \\
 & 3 & 2.293e+1 & 6.833e+0 & 1.798e+0 & 4.374e-1 & 1.105e-1 & 2.741e-2 & 2.009 \\
 & 4 & 2.293e+1 & 4.946e+0 & 1.280e+0 & 3.782e-1 & 1.105e-1 & 2.741e-2 & 1.841 \\
\midrule
\multirow{4}{*}{\shortstack{$\mathcal{P}_{3}$-$\mathcal{P}_{4}$ block\\Jacobi PCG}} & 1 & 1.742e+1 & 4.991e+0 & 1.048e+0 & 2.297e-1 & 5.460e-2 & 1.340e-2 & 2.094 \\
 & 2 & 9.601e+0 & 3.536e+0 & 6.779e-1 & 1.463e-1 & 3.584e-2 & 8.947e-3 & 2.076 \\
 & 3 & 9.414e+0 & 2.752e+0 & 4.177e-1 & 4.894e-2 & 6.456e-3 & 1.093e-3 & 2.866 \\
 & 4 & 1.041e+1 & 2.752e+0 & 3.945e-1 & 4.306e-2 & 5.068e-3 & 7.192e-4 & 3.039 \\
\bottomrule
\end{tabular}%
}
\label{tab:combined_pk_pk1_smoother_compare}
\end{table}

\subsection{Helmholtz Equation} 
Although the theoretical part is devoted to SPD models,  we test  superconvergence of $\mathcal{P}_1$-$\mathcal{P}_2$ and $\mathcal{P}_2$-$\mathcal{P}_3$ FEs for the Helmholtz equation 
    \begin{align*}
        \Delta u+\kappa^2u&=f\quad\text{ in }\Omega=(0,1)^2,\\
        \partial_nu-\texttt{i}\kappa u&=g\quad\text{ on }\partial\Omega,
    \end{align*}
a non-symmetric and indefinite problem. Here $\kappa$ is the wave number and $\texttt{i}=\sqrt{-1}$. FE errors are measured by the $\kappa$-weighted $H^1$-norm
$\|v\|_{1,\kappa} = ( \|\nabla v\|_{L^2(\Omega)}^2 + \kappa^2 \|v\|_{L^2(\Omega)}^2 )^{1/2}$.
The exact solution is chosen as
$u(x,y) = \exp(\texttt{i} \kappa (x+y)/\sqrt{2}).$

We use 4 steps of a Generalized Minimal Residual (GMRES) method with Jacobi precondtioner to contruct  $R_4u_h$. The superconvergence phenomena under different wave numbers ($\kappa = \pi, 5\pi, 10\pi$) are illustrated in Figure \ref{fig:Helm_GMRES_All}. For larger wave numbers $\kappa$, the mesh must be sufficiently refined to enter the asymptotic convergence regime before the superconvergence phenomenon becomes evident. Readers are referred to \cite{DuWuZhang2020} for a $\kappa$-explicit superconvergence analysis by polynomial preserving recovery. 

\begin{figure}[!ht]
    \centering
    \includegraphics[width=0.48\textwidth]{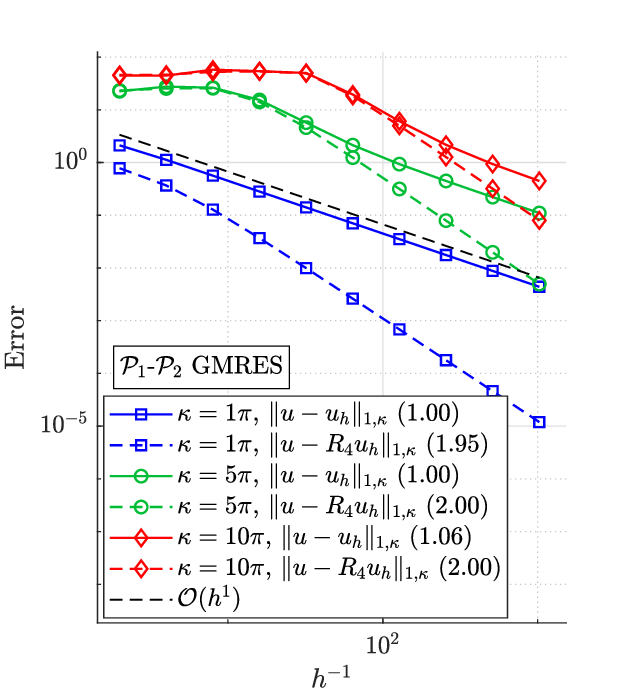} 
    \includegraphics[width=0.48\textwidth]{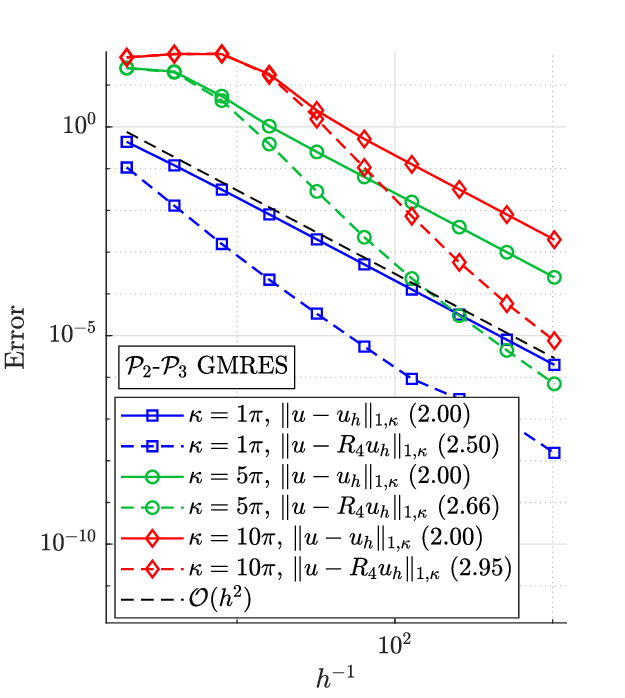} 
    \caption{Superconvergence of 4-step GMRES smoothing with Jacobi preconditioner for the Helmholtz equation. The order of convergence in $h$ is shown in the parentheses.}
    \label{fig:Helm_GMRES_All}
\end{figure}

\subsection{Fully Unstructured Grids}\label{subsect:gmsh} To test superconvergence on unstructured grids, given a mesh-size parameter $h>0$, we use the open-source package \texttt{Gmsh}  (cf.~\cite{GeuzaineRemacle2009}) to generate $\mathcal{T}_h$ without any local symmetry pattern, see Figure \ref{fig:mesh_gmesh}.
The corresponding superconvergence results for the $\mathcal{P}_1$-$\mathcal{P}_2$ and $\mathcal{N}d_1$-$\mathcal{N}d_2$ pairs (using the exact settings from Sections \ref{subsect:Numerical_Poisson} and \ref{subsect:Numerical_Maxwell}) are presented in Table~\ref{tab:gmsh_superconvergence}. The results indicate that superconvergence effects are still present on fully unstructured grids. In addition, the block smoothers lead to more accurate postprocessed FE solutions than the pointwise ones. 

\begin{figure}[!htbp]
    \centering
    \includegraphics[width=0.45\textwidth]{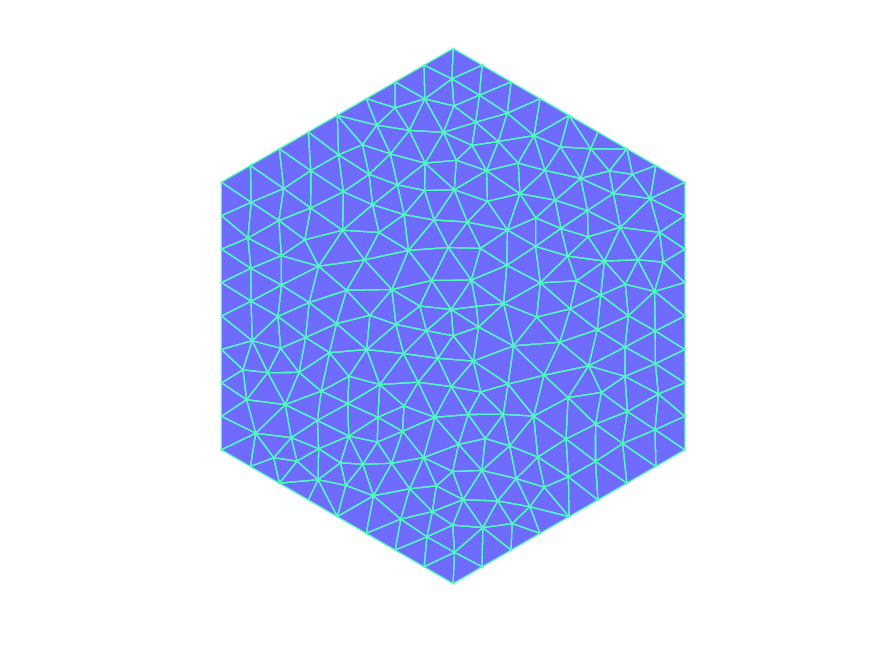}
    \includegraphics[width=0.45\textwidth]{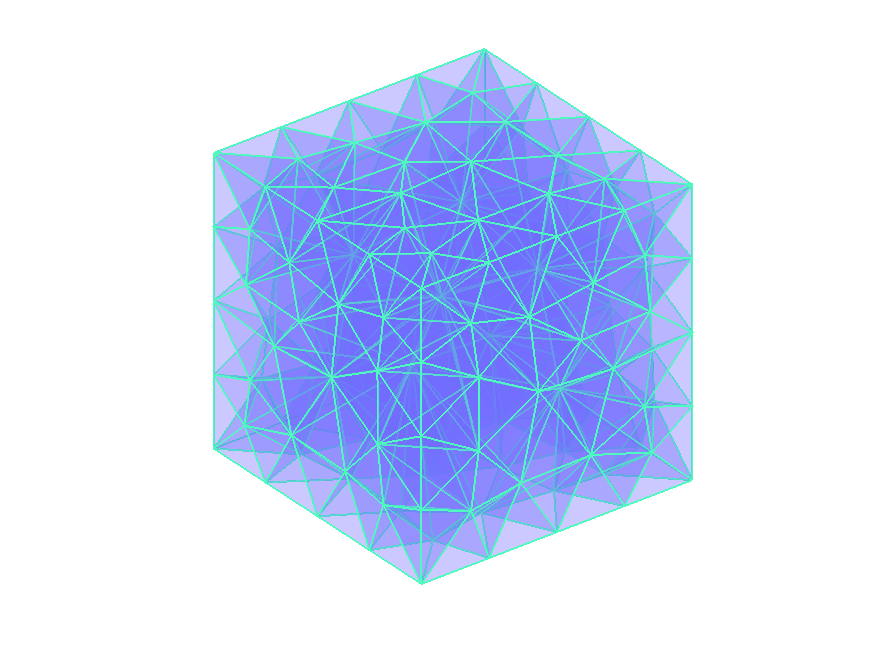}
    \caption{\texttt{Gmsh} meshes for regular hexagon, $h = 1/2^3$ (left) and unit cube, $h = 1/2^2$ (right).}
    \label{fig:mesh_gmesh}
\end{figure}

\begin{table}[htbp]
\centering
\small
\setlength{\tabcolsep}{4.5pt}
\caption{Superconvergence of FE errors for the Poisson and Maxwell equations on \texttt{Gmsh} meshes.}
\resizebox{\linewidth}{!}{%
\begin{tabular}{llccccccc}
\toprule
\multirow{2}{*}{Method} & \multirow{2}{*}{$m$} & \multicolumn{6}{c}{$h$} & \multirow{2}{*}{Order} \\
\cmidrule(lr){3-8}
 & & $1/2^1$ & $1/2^2$ & $1/2^3$ & $1/2^4$ & $1/2^5$ & $1/2^6$ & \\
\midrule
\multicolumn{9}{c}{\textbf{Poisson equation} ($|u-R_m u_h|_{H^1(\Omega)}$)} \\
\midrule
$\mathcal{P}_1$ FE solution & 0 & 1.846e+01 & 1.024e+01 & 5.366e+00 & 2.617e+00 & 1.338e+00 & 6.671e-01 & 0.990 \\
\midrule
\multirow{3}{*}{Jacobi PCG} 
 & 1 & 8.888e+00 & 2.883e+00 & 1.306e+00 & 5.821e-01 & 2.909e-01 & 1.381e-01 & 1.063 \\
 & 2 & 6.691e+00 & 1.965e+00 & 7.547e-01 & 2.679e-01 & 1.220e-01 & 5.329e-02 & 1.250 \\
 & 3 & 6.230e+00 & 1.756e+00 & 6.147e-01 & 2.059e-01 & 8.640e-02 & 3.635e-02 & 1.338 \\
 \midrule
\multirow{3}{*}{\shortstack{Block Jacobi\\ PCG}} 
 & 1 & 6.922e+00 & 2.543e+00 & 1.194e+00 & 5.376e-01 & 2.709e-01 & 1.326e-01 & 1.041 \\
 & 2 & 5.697e+00 & 1.526e+00 & 5.131e-01 & 1.732e-01 & 7.258e-02 & 3.200e-02 & 1.315 \\
 & 3 & 5.697e+00 & 1.399e+00 & 4.580e-01 & 1.428e-01 & 5.333e-02 & 2.219e-02 & 1.440 \\
\midrule
\multicolumn{9}{c}{\textbf{Maxwell equation} ($\|u-R_mu_h\|_{H({\rm curl},\Omega)}$)} \\
\midrule
$\mathcal{N}d_1$ FE solution & 0 & 1.767e-01 & 9.317e-02 & 5.233e-02 & 2.678e-02 & 1.353e-02 & -- & 1.002 \\
\midrule
\multirow{3}{*}{\shortstack[l]{block Jacobi\\PCG}}
 & 1 & 8.153e-02 & 3.491e-02 & 1.502e-02 & 6.526e-03 & 3.149e-03 & -- & 1.247 \\
 & 2 & 4.917e-02 & 2.627e-02 & 9.063e-03 & 2.969e-03 & 1.153e-03 & -- & 1.626 \\
 & 3 & 3.286e-02 & 2.307e-02 & 7.853e-03 & 2.345e-03 & 7.099e-04 & -- & 1.808 \\
\midrule
\multirow{3}{*}{\shortstack[l]{HX smoother\\PCG}}
 & 1 & 1.159e-01 & 5.050e-02 & 2.461e-02 & 1.153e-02 & 5.699e-03 & -- & 1.132 \\
 & 2 & 8.918e-02 & 3.491e-02 & 1.448e-02 & 6.151e-03 & 2.944e-03 & -- & 1.282 \\
 & 3 & 7.164e-02 & 2.920e-02 & 1.049e-02 & 3.947e-03 & 1.767e-03 & -- & 1.454 \\
\bottomrule
\end{tabular}%
}
\label{tab:gmsh_superconvergence}
\end{table}

\subsection{Adaptive FE by Smoothing}\label{subsect:adapt}
Finally, we use $\|u_h-R_mu_h\|_a$ as a posteriori error estimate for adaptive mesh refinement. Consider the Poisson equation \eqref{eq:Poisson} on the L-shaped domain $\Omega=(-1,1)^2\backslash([0,1)\times[-1,0))$ with the exact solution $u=\phi(r)r^{2/3}\sin(2\theta/3)$, where $(r,\theta)$ is the polar coordinate near the origin and $\phi(r) = 
    ( 1 - r/0.9 )^8\mathbf{1}_{\{r\leq0.9\}}$ is a cutoff function.

We adopt the $\mathcal{P}_k$-$\mathcal{P}_{k+1}$ strategy with $m = 4$ steps of CG smoothing. Guided by the a posteriori error estimator $\eta_h:=|R_4 u_h - u_h|_{H^1(\Omega)}$, the adaptive mesh refinement is based on standard D\"orfler marking with threshold $\theta=0.5$ and the newest vertex bisection. As shown in Table \ref{tab:iteration_detailed_errors}, $R_4 u_h$ maintains superconvergence on adaptive meshes. Consequently, the effectivity ratios approach 1.0 for all $\mathcal{P}_k$-$\mathcal{P}_{k+1}$ combinations, confirming the asymptotic exactness of $\eta_h$ despite the corner singularity.

\begin{table}[htbp]
\centering
\small
\setlength{\tabcolsep}{4.5pt} 
\caption{Adaptive $\mathcal{P}_k$-FE errors for the Poisson equation based on postprocessing-type  $\mathcal{P}_k$-$\mathcal{P}_{k+1}$ a posteriori error estimate (CG smoother, $m=4$).}
\label{tab:iteration_detailed_errors}
\resizebox{\linewidth}{!}{%
\begin{tabular}{llccccccc} %
\toprule
\multirow{2}{*}{FE} & \multirow{2}{*}{Metric} & \multicolumn{6}{c}{Adaptive iteration} & \multirow{2}{*}{Order} \\ %
\cmidrule(lr){3-8} %
 & & 10 & 20 & 30 & 40 & 50 & 60 & \\ 
\midrule
\multirow{4}{*}{$\mathcal{P}_1$} 
 & $|u-R_4 u_h|_{H^1(\Omega)}$                         & 4.809e-02 & 1.560e-02 & 3.220e-03 & 7.494e-04 & 2.140e-04 & 7.104e-05 & 1.627 \\
\cmidrule(l){2-9} %
 & $\eta_h$                                              & 9.813e-02 & 4.728e-02 & 2.085e-02 & 9.040e-03 & 4.141e-03 & 1.981e-03 & 1.005 \\
 & $|u-u_h|_{H^1(\Omega)}$                               & 1.023e-01 & 4.886e-02 & 2.103e-02 & 9.063e-03 & 4.145e-03 & 1.983e-03 & 1.007 \\
 & $\eta_h/|u-u_h|_{H^1(\Omega)}$                      & 0.9595    & 0.9675    & 0.9914    & 0.9975    & 0.9990    & 0.9994    & -- \\
\midrule
\multirow{4}{*}{$\mathcal{P}_2$} 
 & $|u-R_4 u_h|_{H^1(\Omega)}$                         & 1.965e-02 & 3.500e-03 & 5.276e-04 & 7.957e-05 & 1.400e-05 & 3.003e-06 & 2.545 \\
\cmidrule(l){2-9} 
 & $\eta_h$                                              & 3.788e-02 & 1.159e-02 & 3.060e-03 & 7.981e-04 & 2.098e-04 & 5.793e-05 & 2.019 \\
 & $|u-u_h|_{H^1(\Omega)}$                               & 3.926e-02 & 1.183e-02 & 3.079e-03 & 8.005e-04 & 2.102e-04 & 5.799e-05 & 2.021 \\
 & $\eta_h/$$|u-u_h|_{H^1(\Omega)}$                      & 0.9649    & 0.9796    & 0.9937    & 0.9970    & 0.9984    & 0.9989    & -- \\
\midrule
\multirow{4}{*}{$\mathcal{P}_3$} 
 & $|u-R_4 u_h|_{H^1(\Omega)}$                         & 1.218e-02 & 2.221e-03 & 4.070e-04 & 5.669e-05 & 7.903e-06 & 1.188e-06 & 3.515 \\
\cmidrule(l){2-9} 
 & $\eta_h$                                              & 1.735e-02 & 5.289e-03 & 1.380e-03 & 2.923e-04 & 5.649e-05 & 1.116e-05 & 3.051 \\
 & $|u-u_h|_{H^1(\Omega)}$                               & 1.880e-02 & 5.539e-03 & 1.407e-03 & 2.951e-04 & 5.687e-05 & 1.121e-05 & 3.055 \\
 & $\eta_h/$$|u-u_h|_{H^1(\Omega)}$                      & 0.9231    & 0.9547    & 0.9806    & 0.9905    & 0.9934    & 0.9957    & -- \\
\midrule
\multirow{4}{*}{$\mathcal{P}_4$} 
 & $|u-R_4 u_h|_{H^1(\Omega)}$                         & 1.629e-02 & 4.989e-03 & 9.106e-04 & 1.765e-04 & 2.614e-05 & 3.655e-06 & 4.552 \\
\cmidrule(l){2-9} 
 & $\eta_h$                                              & 1.798e-02 & 5.111e-03 & 1.584e-03 & 3.923e-04 & 8.444e-05 & 1.630e-05 & 4.101 \\
 & $|u-u_h|_{H^1(\Omega)}$                               & 2.041e-02 & 6.244e-03 & 1.674e-03 & 4.105e-04 & 8.637e-05 & 1.657e-05 & 4.107 \\
 & $\eta_h/$$|u-u_h|_{H^1(\Omega)}$                      & 0.8808    & 0.8185    & 0.9461    & 0.9559    & 0.9776    & 0.9838    & -- \\
\bottomrule
\end{tabular}%
}
\end{table}

\begin{remark}
Our smoothing-based a posteriori error estimate is different from the smoothed adaptive FE method (S-AFEM) in \cite{MulitaGianiHeltai2021}. In particular, S-AFEM replaces exact solves of most algebraic linear systems arising from adaptive feedback loop with simple smoothing passes, while the local error indicator in \cite{MulitaGianiHeltai2021} is of residual-type.
\end{remark}

\section{Concluding Remarks}\label{sect:conclusion}
We proposed a smoothing-based postprocessing for general FEs in arbitrary space dimension. For additive and multiplicative smoothers, we established superconvergence error estimates of FE discretizations of SPD problems on quasi-uniform grids. The smoothing-based superconvergence also occurs for the indefinite Helmholtz equation in numerical experiments.  However, a direct generalization of our smoothing approach fails for saddle-point problems such as mixed FE discretizations of Darcy and Stokes equations. Deriving smoothing-based superconvergence for such systems will be the focus of our future work.

\bibliographystyle{siamplain}

\end{document}